\newcommand{\Mod}[1]{\ (\textup{mod}\ #1)}
\theoremstyle{plain} %text of this environment is typesetted in italics
\newtheorem{theorem}{\indent\sc Theorem}[section]
\newtheorem{lemma}[theorem]{\indent\sc Lemma}
\newtheorem{proposition}[theorem]{\indent\sc Proposition}
\theoremstyle{definition} %text of this environment is typesetted in roman letters
\newtheorem{remark}[theorem]{\indent\sc Remark}
\def\address#1#2{\begingroup
\noindent\parbox[t]{7.8cm}{%
\small{\scshape\ignorespaces#1}\par\vskip1ex
\noindent\small{\itshape E-mail address}%
\/: #2\par\vskip4ex}\hfill%
\endgroup}%
\title{Generation of class fields by Siegel-Ramachandra invariants} %title of the paper
\author{
\textsc{Dong Hwa Shin} %names of authors
}
\date{} %leave empty
\begin{document}

\maketitle

%%%%%%%%%%%%%%% footnote %%%%%%%%%%%%%%%%
\footnote{ %2010 MSC numbers
2010 \textit{Mathematics Subject Classification}. Primary 11G16; Secondary 11F03, 11G15, 11R29,
11R37.}
\footnote{ %key words and phrases
\textit{Key words and phrases}. class field theory, complex multiplication, class
numbers, elliptic and modular units, modular and automorphic
functions.} \footnote{
\thanks{The author was
supported by Hankuk University of Foreign Studies Research Fund of
2014.
} }
%%%%%%%%%%%%%%%%%%%%%%%%%%%%%%%%%%%%%%%

\begin{abstract}
We show in many cases that the Siegel-Ramachandra invariants generate
the ray class fields over imaginary
quadratic fields. As its application we revisit the class
number one problem done by Heegner and Stark, and present a new
proof by making use of inequality argument together with
Shimura's reciprocity law.
\end{abstract}

\section{Introduction}

Let $K$ be an imaginary quadratic field with
the ring of integers $\mathcal{O}_K$. For a nontrivial
ideal $\mathfrak{f}$ of $\mathcal{O}_K$, we denote by $\mathrm{Cl}(\mathfrak{f})$
the ray class group modulo $\mathfrak{f}$ and write $C_0$ for its
identity class.  By class field theory there exists a unique
abelian extension $K_\mathfrak{f}$ of $K$, called the \textit{ray class
field} modulo $\mathfrak{f}$, whose Galois group is isomorphic to
$\mathrm{Cl}(\mathfrak{f})$ via the Artin reciprocity map \cite[Chapter
V]{Janusz}. In particular, the ray class field modulo $\mathcal{O}_K$ is
called the \textit{Hilbert class field} of $K$ and is simply written by
$H_K$.
\par
For a rational pair $
\left[\begin{matrix}r_1\\r_2\end{matrix}\right]
\in\mathbb{Q}^2\setminus\mathbb{Z}^2$, the
\textit{Siegel function} $g_{\left[\begin{smallmatrix}r_1\\r_2\end{smallmatrix}\right]}(\tau)$ on the complex upper
half-plane $\mathbb{H} =\{\tau\in\mathbb{C}~|~\mathrm{Im}(\tau)>0\}$
is defined by
\begin{equation}\label{Siegel}
g_{\left[\begin{smallmatrix}r_1\\r_2\end{smallmatrix}\right]}(\tau)=-q^{(1/2)\mathbf{B}_2(r_1)} e^{\pi
ir_2(r_1-1)}(1-q_z)\prod_{n=1}^\infty(1-q^nq_z)(1-q^nq_z^{-1}),
\end{equation}
where $\mathbf{B}_2(X)=X^2-X+1/6$ is the second Bernoulli
polynomial, $q=e^{2\pi i\tau}$ and $q_z=e^{2\pi iz}$ with
$z=r_1\tau+r_2$. It has neither zeros nor poles on $\mathbb{H}$. If
$\mathfrak{f}\neq\mathcal{O}_K$ and $C\in\mathrm{Cl}(\mathfrak{f})$,
then we take any integral ideal $\mathfrak{c}$ in $C$ and $z_1,z_2\in\mathbb{C}$ such that
$\mathfrak{f}\mathfrak{c}^{-1}=\mathbb{Z}z_1+\mathbb{Z}z_2$ and
$z=z_1/z_2\in\mathbb{H}$. We then define the \textit{Siegel-Ramachandra
invariant} modulo $\mathfrak{f}$ at $C$ by
\begin{equation}\label{S-R}
g_\mathfrak{f}(C)=g_{\left[\begin{smallmatrix}a/N\\b/N\end{smallmatrix}\right]}(z)^{12N},
\end{equation}
where $N$ is the smallest positive integer in $\mathfrak{f}$ and
$a$, $b$ are integers such that $1=(a/N)z_1+(b/N)z_2$.
 This value depends only on the class $C$
\cite[Chapter 2, Remark to Theorem 1.2]{K-L}, and lies in
$K_\mathfrak{f}$ \cite[Chapter 2, Proposition 1.3 and Chapter 11,
Theorem 1.1]{K-L}. Furthermore, it satisfies the transformation
formula
\begin{equation}\label{Artin}
g_\mathfrak{f}(C_1)^{\sigma(C_2)}=g_\mathfrak{f}(C_1C_2)\quad(C_1,C_2\in\mathrm{Cl}(\mathfrak{f})),
\end{equation}
where $\sigma$ is the Artin reciprocity map \cite[pp.235--236]{K-L}.
\par
In 1964 Ramachandra \cite[Theorem 10]{Ramachandra} first
constructed a primitive generator of $K_\mathfrak{f}$ over $K$ for
any $\mathfrak{f}\neq\mathcal{O}_K$, however, his invariant involves
overly complicated product of Siegel-Ramachandra invariants and the
singular values of the modular $\Delta$-function. Thus, Lang
\cite[p.292]{Lang} and Schertz \cite[p.386]{Schertz} conjectured
that the simplest invariant $g_\mathfrak{f}(C_0)$ would be a
primitive generator of $K_\mathfrak{f}$ over $K$ (or, over $H_K$),
and Schertz gave a conditional proof \cite[Theorems 3 and
4]{Schertz}.
\par
In this paper we shall first show in $\S$\ref{sec3} that when
$\mathfrak{f}=(N)$ for an integer $N$ ($\geq2$), $g_\mathfrak{f}(C_0)$
generates $K_{(N)}$ over $H_K$ for almost all imaginary quadratic
fields $K$ (Theorem \ref{overH}). We shall further develop a
simple criterion for $g_\mathfrak{f}(C_0)$ to be a primitive
generator of $K_\mathfrak{f}$ over $K$ when $\mathfrak{f}$ is just a
nontrivial ideal of $\mathcal{O}_K$
(Theorem \ref{overK} and Remark
\ref{degree}) by adopting Schertz's idea. In $\S$\ref{sec4} we shall
investigate some properties of Siegel-Ramachandra invariants modulo $2$.
\par
Gauss' class number one problem for imaginary quadratic fields was
first solved by Heegner \cite{Heegner} in 1952. There was a
gap in his proof which heavily relies on the singular values of the
Weber functions, however, few years later complete proofs were found
independently by Baker \cite{Baker} and Stark \cite{Stark}.
Moreover, Stark \cite{Stark} finally filled up the supposed gap in
Heegner's proof. In $\S$\ref{sec5} as an application we shall
introduce a new proof (Theorems \ref{generator} and
\ref{classnumberone}) by using Siegel functions and Stevenhagen's
explicit description of Shimura's reciprocity law \cite[$\S$3,
6]{Stevenhagen}.

\section{Preliminaries}

First, we shall briefly review necessary basic properties of Siegel
functions and Shimura's reciprocity law.
\par
For a positive integer $N$ let $\zeta_N=e^{2\pi i/N}$ be a primitive $N$-th root of unity and
\begin{eqnarray*}
\Gamma(N)=\{\gamma\in
\mathrm{SL}_2(\mathbb{Z})~|~
\gamma\equiv
I_2\Mod{N}\}
\end{eqnarray*}
be the principal congruence subgroup of level $N$ of
$\mathrm{SL}_2(\mathbb{Z})$. Then its corresponding modular curve of
level $N$ is denoted by
$X(N)=\Gamma(N)\backslash(\mathbb{H}\cup\mathbb{P}^1(\mathbb{Q}))$.
Furthermore, we let $\mathcal{F}_N$ be the field of meromorphic
functions on $X(N)$ defined over the $N$-th cyclotomic
field $\mathbb{Q}(\zeta_N)$. We know that
$\mathcal{F}_1=\mathbb{Q}(j(\tau))$, where
\begin{equation}\label{jFourier}
j(\tau)=q^{-1}+744+196884q+21493760q^2+864299970q^3+20245856256q^4+\cdots
\end{equation}
is the elliptic modular $j$-function, and $\mathcal{F}_N$ is a
Galois extension of $\mathcal{F}_1$ with
\begin{equation}\label{Gal(F_N/F_1)}
\mathrm{Gal}(\mathcal{F}_N/\mathcal{F}_1)\simeq\mathrm{GL}_2(\mathbb{Z}/N\mathbb{Z})/\{\pm I_2\},
\end{equation}
whose action is given as follows: For an element
$\alpha\in\mathrm{GL}_2(\mathbb{Z}/N\mathbb{Z})/\{\pm I_2\}$ we
decompose it into
\begin{equation*}
\alpha=\alpha_1\cdot\alpha_2~\textrm{for some}~
\alpha_1\in\mathrm{SL}_2(\mathbb{Z})~\textrm{and}~
\alpha_2=\left[\begin{matrix}1&0\\0&d\end{matrix}\right]
~\textrm{with}~d\in(\mathbb{Z}/N\mathbb{Z})^*.
\end{equation*}
Then, the action of $\alpha_1$ is given by a fractional linear
transformation. And, $\alpha_2$ acts by the rule
\begin{equation*}
\sum_{n\gg-\infty} c_nq^{n/N}\mapsto \sum_{n\gg-\infty}
c_n^{\sigma_d}q^{n/N},
\end{equation*}
where $\sum_{n\gg-\infty} c_nq^{n/N}$ is the Fourier expansion of a
function in $\mathcal{F}_N$ and $\sigma_d$ is the automorphism of
$\mathbb{Q}(\zeta_N)$ defined by $\zeta_N^{\sigma_d}=\zeta_N^d$
\cite[Chapter 6, $\S$3]{Lang}. Here, for later use, we observe that
\begin{equation}\label{size}
[\mathcal{F}_N:\mathcal{F}_1]=\#\mathrm{GL}_2(\mathbb{Z}/N\mathbb{Z})/\{\pm I_2\}=
\left\{\begin{array}{ll} 6&
\textrm{if}~N=2,\vspace{0.1cm}\\
(N^4/2)\prod_{p|N}(1-p^{-1})(1-p^{-2})& \textrm{if}~N\geq3
\end{array}\right.
\end{equation}
\cite[pp.21--22]{Shimura}.

\begin{proposition}\label{level}
For a given integer $N$ \textup{(}$\geq2$\textup{)} let
$\{m(\mathbf{r})\}_{
\mathbf{r}\in(1/N)\mathbb{Z}^2\setminus\mathbb{Z}^2}$ be a
family of integers such that $m(\mathbf{r})=0$ except finitely many
$\mathbf{r}$. A product of Siegel functions
\begin{equation*}
g(\tau)=\zeta
\prod_{\mathbf{r}=\left[\begin{smallmatrix}r_1\\r_2\end{smallmatrix}\right]}
g_{\mathbf{r}}(\tau)^{m(\mathbf{r})}
\end{equation*}
belongs to $\mathcal{F}_N$,
where $\zeta=\prod_{\mathbf{r}}e^{\pi ir_2(1-r_1)m(\mathbf{r})}$,
if
\begin{eqnarray*}
&&\sum_{\mathbf{r}} m(\mathbf{r})(Nr_1)^2\equiv\sum_{\mathbf{r}}
m(\mathbf{r})(Nr_2)^2\equiv0\Mod{\gcd(2,N)\cdot N},\\
&&\sum_{\mathbf{r}} m(\mathbf{r})(Nr_1)(Nr_2)\equiv0\Mod{N},\\
&&\gcd(12, N)\cdot\sum_{\mathbf{r}} m(\mathbf{r})\equiv0\Mod{12}.
\end{eqnarray*}
\end{proposition}
\begin{proof}
See \cite[Chapter 3, Theorems 5.2 and 5.3]{K-L}.
\end{proof}

\begin{remark}
Let $g(\tau)$ be an element of $\mathcal{F}_N$ for some integer $N$
($\geq2$). If both $g(\tau)$ and $g(\tau)^{-1}$ are integral over
$\mathbb{Q}[j(\tau)]$, then $g(\tau)$ is called a \textit{modular unit} (of
level $N$) . As is well-known, $g(\tau)$ is a modular unit if and
only if it has neither zeros nor poles on $\mathbb{H}$ (\cite[p.36]{K-L}
or \cite[Theorem 2.2]{K-S}). Hence any product of Siegel functions
becomes a modular unit. In particular,
$g_{\left[\begin{smallmatrix}r_1\\r_2\end{smallmatrix}\right]}(\tau)^{12N/\gcd(6,N)}$ is a modular unit of level $N$
for any $\left[\begin{matrix}r_1\\r_2\end{matrix}\right]\in(1/N)\mathbb{Z}^2\setminus\mathbb{Z}^2$.
\end{remark}

For a real number $x$ we denote by $\langle x\rangle$ the fractional
part of $x$ in the interval $[0,1)$.

\begin{proposition}\label{transform}
Let $\left[\begin{matrix}r_1\\r_2\end{matrix}\right]\in(1/N)\mathbb{Z}^2\setminus\mathbb{Z}^2$ for an integer $N$
\textup{(}$\geq2$\textup{)}.
\begin{itemize}
\item[\textup{(i)}] We have the $q$-order formula
\begin{equation*}
\mathrm{ord}_q~g_{\left[\begin{smallmatrix}r_1\\r_2\end{smallmatrix}\right]}(\tau)=\frac{1}{2}\mathbf{B}_2 (\langle
r_1\rangle).
\end{equation*}
\item[\textup{(ii)}]
For
$\gamma=\left[\begin{matrix}a&b\\c&d\end{matrix}\right]\in\mathrm{SL}_2(\mathbb{Z})$
with $c>0$ we get the transformation formula
\begin{equation*}
g_{\left[\begin{smallmatrix}r_1\\r_2\end{smallmatrix}\right]}(\tau)\circ\gamma=-ie^{(\pi
i/6)(a/c+d/c-12\sum_{k=1}^{c-1} (k/c-1/2)(\langle kd/c\rangle-1/2))}
g_{
\left[\begin{smallmatrix}r_1a+r_2c\\r_1b+r_2d\end{smallmatrix}\right]
}(\tau).
\end{equation*}
\item[\textup{(iii)}] For $s=\left[\begin{matrix}s_1\\s_2\end{matrix}\right]\in\mathbb{Z}^2$ we have
\begin{equation*}
g_{\left[\begin{smallmatrix}
r_1+s_1\\r_2+s_2\end{smallmatrix}\right]}(\tau) =(-1)^{s_1s_2+s_1+s_2}e^{-\pi
i(s_1r_2-s_2r_1)}g_{\left[\begin{smallmatrix}r_1\\r_2\end{smallmatrix}\right]}(\tau).
\end{equation*}
\item[\textup{(iv)}]
$g_{\left[\begin{smallmatrix}r_1\\r_2\end{smallmatrix}\right]}(\tau)^{12N/\gcd(6,N)}$ is determined only by
$\pm\left[\begin{matrix}r_1\\r_2\end{matrix}\right]\Mod{\mathbb{Z}^2}$.
\item[\textup{(v)}] An element
$\left[\begin{matrix}a&b\\c&d\end{matrix}\right]\in\mathrm{GL}_2(\mathbb{Z}/N\mathbb{Z})/\{\pm I_2\}$
\textup{(}$\simeq
\mathrm{Gal}(\mathcal{F}_N/\mathcal{F}_1)$\textup{)} acts on it by
\begin{equation*}
(g_{\left[\begin{smallmatrix}r_1\\r_2\end{smallmatrix}\right]}(\tau)^{12N/\gcd(6,N)})
^{\left[\begin{smallmatrix}a&b\\c&d\end{smallmatrix}\right]}
=g_{\left[\begin{smallmatrix}r_1a+r_2c\\
r_1b+r_2d\end{smallmatrix}\right]}(\tau)^{12N/\gcd(6,N)}.
\end{equation*}
\item[\textup{(vi)}] $g_{\left[\begin{smallmatrix}r_1\\r_2\end{smallmatrix}\right]}(\tau)$ is integral over
$\mathbb{Z}[j(\tau)]$.
\end{itemize}
\end{proposition}
\begin{proof}
(i) See \cite[p.31]{K-L}.\\
(ii) See \cite[p.27, K1 and p.29]{K-L} and \cite[Chapter IX]{Lang3}.\\
(iii) See \cite[p.28, K2 and p.29]{K-L}.\\
(iv) One can easily check this relation by the definition (\ref{Siegel}) and (iii).\\
(v) See \cite[Chapter 2, Proposition 1.3]{K-L} and (iv).\\
(vi) See \cite[$\S$3]{K-S}.
\end{proof}

For an imaginary quadratic field $K$ of discriminant $d_K$ we let
\begin{equation}\label{theta}
\tau_K=\left\{\begin{array}{ll}
\sqrt{d_K}/2&\textrm{if}~d_K\equiv0\Mod{4},
\vspace{0.1cm}\\
(3+\sqrt{d_K})/2&\textrm{if}~ d_K\equiv1\Mod{4},\end{array}\right.
\end{equation}
which generates the ring of integers $\mathcal{O}_K$ of $K$ over
$\mathbb{Z}$. Then we have
\begin{equation*}
\min(\tau_K,\mathbb{Q})=X^2+BX+C=\left\{
\begin{array}{ll}
X^2-d_K/4 & \textrm{if}~d_K\equiv0\Mod{4},
\vspace{0.1cm}\\
X^2-3X+(9-d_K)/4 & \textrm{if}~d_K\equiv1\Mod{4}.
\end{array}\right.
\end{equation*}
For each positive integer $N$ we define the matrix group
\begin{equation*}
W_{N,\tau_K}=\bigg\{\left[\begin{matrix}t-Bs &
-Cs\\s&t\end{matrix}\right]\in\mathrm{GL}_2(\mathbb{Z}/N\mathbb{Z})~|~t,s\in\mathbb{Z}/N\mathbb{Z}\bigg\}.
\end{equation*}

\begin{proposition}\label{CM} For a positive integer $N$ we have
\begin{equation*}
K_{(N)}=K(h(\tau_K)~|~h\in\mathcal{F}_N~\textrm{is
defined and finite at $\tau_K$}).
\end{equation*}
\end{proposition}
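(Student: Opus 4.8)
The plan is to prove the equality $K_{(N)}=K\mathcal{F}_N(\theta_K)$ by invoking Shimura's reciprocity law, which identifies the Galois action on special values $h(\theta_K)$ ($h\in\mathcal{F}_N$) with the action of an explicit matrix group. First I would recall that by the theory of complex multiplication the value $j(\theta_K)$ generates the Hilbert class field $H_K=K_{(1)}$ over $K$, and that $K_{(N)}$ is the ray class field of conductor $(N)$. The idelic formulation of Shimura's reciprocity law (as in Shimura's book, or in Stevenhagen's $\S\S3,6$ which the paper will use) says that the field $K(h(\theta_K)\;;\;h\in\mathcal{F}_N,\ h\text{ finite at }\theta_K)$ is an abelian extension of $K$, and that an idele $s\in K_\mathbb{A}^*$ acts on $h(\theta_K)$ through its image in $\mathrm{GL}_2(\mathbb{Z}/N\mathbb{Z})$ obtained by letting $s$ act on the $\mathcal{O}_K$-lattice basis $(\theta_K,1)$ modulo $N$; the kernel of this action is exactly the group of ideles fixing $K_{(N)}$.

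Next I would make this concrete. For a finite prime, the local component of $s$ acting on the lattice $\mathbb{Z}\theta_K+\mathbb{Z}$ is multiplication by an element of $(\mathcal{O}_K/N\mathcal{O}_K)^*$, and writing this element as $t+s\theta_K$ with $t,s\in\mathbb{Z}/N\mathbb{Z}$ and using $\theta_K^2=-B\theta_K-C$, its matrix with respect to the ordered basis $(\theta_K,1)$ is precisely $\left(\begin{smallmatrix}t-Bs&-Cs\\s&t\end{smallmatrix}\right)$ — that is, the image lands in the group $W_{N,\theta_K}$ defined just above the proposition. The key claim is therefore that the natural map $(\mathcal{O}_K/N\mathcal{O}_K)^*\to W_{N,\theta_K}$ is a bijection and that, under Shimura's reciprocity law, $\mathrm{Gal}(K\mathcal{F}_N(\theta_K)/K)$ is identified with $W_{N,\theta_K}/\{\pm1_2\}\cap W_{N,\theta_K}$, while $\mathrm{Gal}(K_{(N)}/K)\cong\mathrm{Cl}((N))$ is identified with $(\mathcal{O}_K/N\mathcal{O}_K)^*/(\mathbb{Z}/N\mathbb{Z})^*\cdot(\text{image of }\mathcal{O}_K^*)$ via the Artin map. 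Comparing the two Galois groups, together with the inclusion $K\mathcal{F}_1(\theta_K)=K(j(\theta_K))=H_K$ and the fact that $\mathcal{F}_N/\mathcal{F}_1$ is Galois with group $\mathrm{GL}_2(\mathbb{Z}/N\mathbb{Z})/\{\pm1_2\}$, forces $K\mathcal{F}_N(\theta_K)=K_{(N)}$; both are abelian over $K$, both contain $H_K$, and the degrees match because the decomposition group / ramification bookkeeping on the $\mathrm{GL}_2$ side exactly reproduces the congruence conditions defining $\mathrm{Cl}((N))$.

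A cleaner route, and the one I would actually write, is to cite the classical statement directly: it is a standard consequence of the main theorem of complex multiplication (see e.g. Shimura, \emph{Introduction to the Arithmetic Theory of Automorphic Functions}, Chapter 6, or Lang, \emph{Elliptic Functions}, Chapter 10) that $K_{(N)}$ is generated over $K$ by $j(\theta_K)$ together with the Fricke functions of level $N$ evaluated at $\theta_K$, and the Fricke functions generate $\mathcal{F}_N$ over $\mathcal{F}_1$. Hence $K_{(N)}=K\mathcal{F}_1(j(\theta_K))\cdot(\text{level-}N\text{ values})=K\mathcal{F}_N(\theta_K)$, where the notation $\mathcal{F}_N(\theta_K)$ means the field generated by all $h(\theta_K)$ with $h\in\mathcal{F}_N$ defined and finite at $\theta_K$. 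The second equality in the proposition, $K\mathcal{F}_N(\theta_K)=K(h(\theta_K);h\in\mathcal{F}_N\text{ finite at }\theta_K)$, is then just a matter of unwinding notation: every element of $\mathcal{F}_N$ is a rational function over $\mathbb{Q}(\zeta_N)$ in $j$ and the Fricke functions, and $\zeta_N\in K_{(N)}$ since the cyclotomic field $\mathbb{Q}(\zeta_N)$ is contained in the ray class field of conductor $(N)$ over $\mathbb{Q}$, hence in $K_{(N)}$.

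The main obstacle is purely expository: one must be careful that "$h$ defined and finite at $\theta_K$" genuinely ranges over enough functions to generate the whole ray class field — i.e. that evaluation at the CM point $\theta_K$ does not collapse the extension. This is handled by the fact that $\theta_K$ is not a pole of the Fricke functions of level $N$ (its reductions are finite CM points on $X(N)$) and by the surjectivity of the idelic-to-$W_{N,\theta_K}$ map, which guarantees the Galois group is as large as claimed; I would simply point to Shimura's and Lang's treatments for these verifications rather than reproduce them. So the proof reduces to one sentence: "This is the main theorem of complex multiplication in the form given in \cite[Chapter 6, Theorem 6.31]{Shimura}; see also \cite[Chapter 10]{Lang}," with the notational identification of $\mathcal{F}_N$-values with Fricke-function values spelled out.
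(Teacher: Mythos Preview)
Your proposal is correct and matches the paper's approach: the paper's proof of this proposition is simply the one-line citation ``See \cite[Chapter 10 Corollary to Theorem 2]{Lang} or \cite[Proposition 6.33]{Shimura},'' and your ``cleaner route'' of citing Shimura's Chapter 6 and Lang's Chapter 10 is exactly this. The surrounding discussion you give of the idelic reciprocity law and the $W_{N,\theta_K}$-group is accurate context but is not needed here, since the paper treats this proposition as a black-box import and reserves the explicit reciprocity computations for Proposition~\ref{Gee}.
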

\begin{proof}
See \cite[Chapter 10, Corollary to Theorem 2]{Lang}  or
\cite[Proposition 6.33]{Shimura}.
\end{proof}

\begin{proposition}[Shimura's reciprocity law]\label{Gee}
Let $K$ be an imaginary quadratic field. For each positive integer
$N$, the matrix group $W_{N,\tau_K}$ gives rise to the surjection
\begin{eqnarray*}
W_{N,\tau_K}&\longrightarrow&\mathrm{Gal}(K_{(N)}/H_K)\\
\alpha&\mapsto&(h(\tau_K)\mapsto h^\alpha(\tau_K)~|~
\textrm{$h(\tau)\in\mathcal{F}_N$ is defined and finite at
$\tau_K$}),
\end{eqnarray*}
whose kernel is
\begin{equation*}
\mathrm{Ker}_{N,\tau_K}=\left\{\begin{array}{ll}
\bigg\{\pm\left[\begin{matrix}1&0\\0&1\end{matrix}\right],~
\pm\left[\begin{matrix}-1&-3\\1&-2\end{matrix}\right],~
\pm\left[\begin{matrix}-2&3\\-1&1\end{matrix}\right] \bigg\} &
\textrm{if}~K=\mathbb{Q}(\sqrt{-3}),
\vspace{0.1cm}\\
\bigg\{\pm\left[\begin{matrix}1&0\\0&1\end{matrix}\right],~
\pm\left[\begin{matrix}0&-1\\1&0\end{matrix}\right] \bigg\} &
\textrm{if}~K=\mathbb{Q}(\sqrt{-1}),
\vspace{0.1cm}\\
\bigg\{\pm\left[\begin{matrix}1&0\\0&1\end{matrix}\right]\bigg\} &
\textrm{otherwise.}
\end{array}\right.
\end{equation*}
\end{proposition}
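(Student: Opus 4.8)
The plan is to obtain the surjection and compute its kernel by combining the main theorem of complex multiplication (Proposition \ref{CM}) with the known adelic form of Shimura's reciprocity law, specialized to the matrix group $W_{N,\theta_K}$. First I would recall from \cite[Chapter 6, Theorem 6.31 and Proposition 6.34]{Shimura} (or the corresponding statements in \cite[Chapter 10]{Lang}) that for $h(\tau)\in\mathcal{F}_N$ defined and finite at $\theta_K$, and for an idele $s$ of $K$ of the appropriate type, one has $h(\theta_K)^{[s,K]}=h^{u_N(s)}(\theta_K)$, where $[\,\cdot\,,K]$ is the Artin symbol and $u_N(s)\in\mathrm{GL}_2(\mathbb{Z}/N\mathbb{Z})/\{\pm 1_2\}$ is the matrix describing the action of $s$ on the $N$-torsion of the elliptic curve $\mathbb{C}/\mathcal{O}_K$ with respect to the $\mathbb{Z}$-basis $\{\theta_K,1\}$. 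The point is that multiplication by a finite idele concentrated at primes dividing $N$ acts $\mathcal{O}_K$-linearly, hence via a matrix in the image of $(\mathcal{O}_K/N\mathcal{O}_K)^*$ acting on $\mathcal{O}_K/N\mathcal{O}_K\simeq(\mathbb{Z}/N\mathbb{Z})^2$ by multiplication; writing out this multiplication map in the basis $\{\theta_K,1\}$ using $\theta_K^2=-B\theta_K-C$ gives exactly the matrices $\left(\begin{smallmatrix}t-Bs&-Cs\\s&t\end{smallmatrix}\right)$, i.e.\ the group $W_{N,\theta_K}$.

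Next I would assemble this into the map in the statement. Since $H_K$ is the subfield of $K_{(N)}$ fixed by the image of $(\mathcal{O}_K/N\mathcal{O}_K)^*$ under the reciprocity map (the ray class field modulo $(N)$ sits over the Hilbert class field, and the relative Galois group $\mathrm{Gal}(K_{(N)}/H_K)$ is the quotient of $\mathrm{Cl}(N)$ corresponding to ideles trivial away from $N$, which is precisely $(\mathcal{O}_K/N\mathcal{O}_K)^*/\mathcal{O}_K^*$ up to the usual identifications), the assignment $\alpha\mapsto(h(\theta_K)\mapsto h^\alpha(\theta_K))$ is well defined on $W_{N,\theta_K}$, is a homomorphism into $\mathrm{Gal}(K_{(N)}/H_K)$ because the action on $\mathcal{F}_N$ via $\mathrm{GL}_2(\mathbb{Z}/N\mathbb{Z})/\{\pm 1_2\}$ is a group action and $K_{(N)}=K\mathcal{F}_N(\theta_K)$ by Proposition \ref{CM}, and is surjective because every element of $(\mathcal{O}_K/N\mathcal{O}_K)^*$ does arise from an idele. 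One must check that the matrix entries are taken in $\mathrm{GL}_2(\mathbb{Z}/N\mathbb{Z})$ and not merely $\mathrm{M}_2$; this holds because an element of $(\mathcal{O}_K/N\mathcal{O}_K)^*$ is a unit, and its norm, which equals $\det\left(\begin{smallmatrix}t-Bs&-Cs\\s&t\end{smallmatrix}\right)=t^2+Bst+Cs^2$, is then a unit mod $N$.

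Finally, the kernel. An element $\alpha=\left(\begin{smallmatrix}t-Bs&-Cs\\s&t\end{smallmatrix}\right)\in W_{N,\theta_K}$ lies in the kernel iff it fixes every $h(\theta_K)$ with $h\in\mathcal{F}_N$, which by Proposition \ref{CM} means it fixes all of $K_{(N)}$ over $H_K$; equivalently, $\alpha$ must act trivially on $\mathcal{F}_N$ after evaluation at $\theta_K$, i.e.\ $\alpha$ lies in the subgroup of $W_{N,\theta_K}$ that acts trivially on $K_{(N)}$. The reciprocity map $W_{N,\theta_K}\to\mathrm{Gal}(K_{(N)}/H_K)$ has kernel corresponding to the units $\mathcal{O}_K^*$ sitting inside $(\mathcal{O}_K/N\mathcal{O}_K)^*$, since principal ideles act trivially; translating $\mathcal{O}_K^*=\{\pm 1\}$, $\{\pm 1,\pm i\}$, or $\{\pm 1,\pm\omega,\pm\omega^2\}$ into matrices via the basis $\{\theta_K,1\}$ and reducing mod $\{\pm 1_2\}$ gives the three listed cases. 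Concretely, for $K=\mathbb{Q}(\sqrt{-1})$ with $\theta_K=\sqrt{-1}$ one has $B=0$, $C=1$, and multiplication by $i$ sends $\theta_K\mapsto i\theta_K=-1$ and $1\mapsto i=\theta_K$, giving $\left(\begin{smallmatrix}0&-1\\1&0\end{smallmatrix}\right)$; for $K=\mathbb{Q}(\sqrt{-3})$ with $\theta_K=(3+\sqrt{-3})/2$ one has $B=-3$, $C=3$, and computing multiplication by a primitive sixth root of unity in this basis produces $\left(\begin{smallmatrix}-1&-3\\1&-2\end{smallmatrix}\right)$ and its square $\left(\begin{smallmatrix}-2&3\\-1&1\end{smallmatrix}\right)$. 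I expect the main obstacle to be bookkeeping: pinning down the precise normalization in Shimura's adelic reciprocity law (which basis, which side the matrix acts on, and the $\{\pm 1_2\}$ ambiguity) so that the matrix realizing multiplication by a unit comes out exactly as written, and verifying that these are the only kernel elements — i.e.\ that no nonunit of $(\mathcal{O}_K/N\mathcal{O}_K)^*$ becomes trivial — which follows from the exactness of the reciprocity sequence but should be stated carefully.
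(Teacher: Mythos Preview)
The paper does not give its own proof of this proposition; it simply cites \cite[\S3]{Stevenhagen} and \cite[pp.~50--51]{Gee}. Your outline is precisely the argument found in those references: identify $W_{N,\theta_K}$ with the image of the regular representation $(\mathcal{O}_K/N\mathcal{O}_K)^*\hookrightarrow\mathrm{GL}_2(\mathbb{Z}/N\mathbb{Z})$ in the basis $\{\theta_K,1\}$, invoke the adelic form of Shimura's reciprocity law to obtain the surjection onto $\mathrm{Gal}(K_{(N)}/H_K)\simeq(\mathcal{O}_K/N\mathcal{O}_K)^*/\mathcal{O}_K^*$, and read off the kernel as the image of $\mathcal{O}_K^*$. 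Your caveat about bookkeeping (basis conventions, side of the action, the $\{\pm1_2\}$ quotient) is exactly the content that Stevenhagen and Gee pin down, so there is nothing to add beyond pointing to those sources, which is what the paper does.
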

\begin{proof}
See \cite[$\S$3]{Stevenhagen} or \cite[pp.50--51]{Gee}.
\end{proof}

For an imaginary quadratic field $K$ of discriminant $d_K$, let
\begin{align*}
\mathrm{C}(d_K)=\{aX^2+bXY+cY^2\in\mathbb{Z}[X,Y]~|~
\gcd(a,b,c)=1,~b^2-4ac=d_K,\\(-a<b\leq a<c~\textrm{or}~0\leq b\leq
a=c)\}
\end{align*}
be the form class group of reduced quadratic forms of discriminant
$d_K$, whose identity element is
\begin{equation*}
\left\{\begin{array}{ll} X^2-(d_K/4)Y^2 &
\textrm{if}~d_K\equiv0\Mod{4},
\vspace{0.1cm}\\
 X^2+XY+((1-d_K)/4)Y^2 &
\textrm{if}~d_K\equiv1\Mod{4}
\end{array}\right.
\end{equation*}
\cite[Theorems 2.8 and 3.9]{Cox}. Note that if
$aX^2+bXY+cY^2\in\mathrm{C}(d_K)$, then
\begin{equation*}
a\leq\sqrt{|d_K|/3}
\end{equation*}
\cite[p.29]{Cox} and the group $\mathrm{C}(d_K)$ is isomorphic to
the ideal class group of $K$, and hence to $\mathrm{Gal}(H_K/K)$
\cite[Theorem 7.7]{Cox}. Thus, in particular, the class number of
$K$ is the same as the order of the group $\mathrm{C}(d_K)$, namely
$[H_K:K]$. We denote it by $h_K$.

\begin{proposition}[Shimura's reciprocity law]\label{Hilbert}
Let $K$ be an imaginary quadratic field of discriminant $d_K$, and
$p$ be a prime. For each $Q=aX^2+bXY+cY^2\in\mathrm{C}(d_K)$ let
\begin{equation*}
\tau_Q=(-b+\sqrt{d_K})/2a\quad(\in\mathbb{H})
\end{equation*}
and $u_Q$ be an element of
$\mathrm{GL}_2(\mathbb{Z}/p\mathbb{Z})/\{\pm I_2\}$ given as follows:
\begin{itemize}
\item[] \textup{Case 1.} $d_K\equiv0\Mod{4}$
\begin{equation*}
u_Q=\left\{\begin{array}{ll}
\left[\begin{matrix}a&b/2\\0&1\end{matrix}\right]&\textrm{if}~p\nmid a,\vspace{0.1cm}\\
\left[\begin{matrix}-b/2 &-c\\1&0\end{matrix}\right]&\textrm{if}~p\mid a~\textrm{and}~p\nmid c,\vspace{0.1cm}\\
\left[\begin{matrix}-a-b/2&-c-b/2\\1&-1\end{matrix}\right]&\textrm{if}~p\mid a~\textrm{and}~p\mid
c,
\end{array}\right.
\end{equation*}
\item[] \textup{Case 2.} $d_K\equiv1\Mod{4}$
\begin{equation*}
u_Q=\left\{\begin{array}{ll}
\left[\begin{matrix}a& (3+b)/2\\0&1\end{matrix}\right]&\textrm{if}~p\nmid a,\vspace{0.1cm}\\
\left[\begin{matrix}(3-b)/2&-c\\1&0\end{matrix}\right]&\textrm{if}~p\mid a~\textrm{and}~p\nmid c,\vspace{0.1cm}\\
\left[\begin{matrix}-a+(3-b)/2&-c-(3+b)/2\\1&-1\end{matrix}\right]&\textrm{if}~p\mid a~\textrm{and}~p\mid
c.
\end{array}\right.
\end{equation*}
\end{itemize}
If $h(\tau)\in\mathcal{F}_p$ is defined and finite at $\tau_K$ and
$h(\tau_K)\in H_K$, then the conjugates of $h(\tau_K)$ via the
action of $\mathrm{Gal}(H_K/K)$ are given by
\begin{equation*}
h^{u_Q}(\tau_Q) \quad(Q\in\mathrm{C}(d_K))
\end{equation*}
possibly with some multiplicity.
\end{proposition}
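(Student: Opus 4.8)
The plan is to realize each element of $\mathrm{Gal}(H_K/K)$ as the restriction of an automorphism of $K_{(p)}$ that can be evaluated on $h(\theta_K)$ by means of the idelic formulation of Shimura's reciprocity law underlying Propositions \ref{CM} and \ref{Gee}. I would start from the classical dictionary: the form class group $\mathrm{C}(d_K)$ is isomorphic to $\mathrm{Gal}(H_K/K)$, with the form $Q=aX^2+bXY+cY^2$ corresponding to the automorphism $\sigma_Q$ characterized by $j(\theta_K)^{\sigma_Q}=j(\theta_Q)$, and the proper $\mathcal{O}_K$-ideal attached to $Q$ is $\mathfrak{a}_Q=\mathbb{Z}a+\mathbb{Z}\tfrac{-b+\sqrt{d_K}}{2}$, of norm $a$. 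By (\ref{theta}), $a\theta_Q=\tfrac{-b+\sqrt{d_K}}{2}$ lies in $\mathcal{O}_K$, so $\mathfrak{a}_Q=a(\mathbb{Z}\theta_Q+\mathbb{Z})$ and the lattice $\mathbb{Z}\theta_Q+\mathbb{Z}$ is homothetic to $\mathfrak{a}_Q$; in particular $\theta_{Q_0}$ is $\mathrm{SL}_2(\mathbb{Z})$-equivalent to $\theta_K$. Thus the statement for $h=j$ is the first main theorem of complex multiplication, and what has to be proved is its refinement to functions of level $p$.

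Fix $Q$ and let $\mathbb{A}_{K,f}^\times$ denote the group of finite ideles of $K$. I would choose $x_Q\in\mathbb{A}_{K,f}^\times$ supported on the rational primes dividing $a$ (so $x_{Q,\ell}$ is a unit for $\ell\nmid a$) with $x_Q\widehat{\mathcal{O}}_K=\widehat{\mathfrak{a}_Q}$; by class field theory its Artin symbol $[x_Q,K]$ restricts to $\sigma_Q$ on $H_K$. Writing $q_{\theta_K}\colon\mathbb{A}_{K,f}^\times\to\mathrm{GL}_2(\mathbb{A}_{\mathbb{Q},f})$ for the embedding attached to the $\mathbb{Q}$-basis $\{\theta_K,1\}$ of $K$, so that $q_{\theta_K}(s\theta_K+t)=\left(\begin{smallmatrix}t-Bs&-Cs\\ s&t\end{smallmatrix}\right)$ with $X^2+BX+C=\min(\theta_K,\mathbb{Q})$ (its reduction modulo $N$ on $\widehat{\mathcal{O}}_K^\times$ being $W_{N,\theta_K}$), Shimura's reciprocity law gives, for every $h\in\mathcal{F}_p$ defined and finite at $\theta_K$,
\begin{equation*}
h(\theta_K)^{[x_Q,K]}=h^{\,q_{\theta_K}(x_Q)^{-1}}(\theta_K),
\end{equation*}
where $\mathrm{GL}_2(\mathbb{A}_{\mathbb{Q},f})$ acts on $\bigcup_{n\geq1}\mathcal{F}_n$ extending the action (\ref{Gal(F_N/F_1)}). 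Since $h(\theta_K)\in H_K$, the left-hand side equals $h(\theta_K)^{\sigma_Q}$ and is independent of all choices made.

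It then remains to transport the right-hand side to a value at $\theta_Q$. Using $\mathrm{GL}_2(\mathbb{A}_{\mathbb{Q},f})=\mathrm{GL}_2(\widehat{\mathbb{Z}})\cdot\mathrm{GL}_2^+(\mathbb{Q})$, I would factor $q_{\theta_K}(x_Q)^{-1}=u\cdot\beta$ with $u\in\mathrm{GL}_2(\widehat{\mathbb{Z}})$ and $\beta\in\mathrm{GL}_2^+(\mathbb{Q})$, so that
\begin{equation*}
h^{\,q_{\theta_K}(x_Q)^{-1}}(\theta_K)=(h^u\circ\beta)(\theta_K)=h^u(\beta\theta_K),
\end{equation*}
where $h^u$ depends only on $u\bmod p$. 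Since $q_{\theta_K}(x_Q)\widehat{\mathbb{Z}}^2$ is the coordinate lattice of $\widehat{\mathfrak{a}_Q}$ in the basis $\{\theta_K,1\}$, the point $\beta\theta_K$ is $\mathrm{SL}_2(\mathbb{Z})$-equivalent to the reduced CM point $\theta_Q$ attached to $\mathfrak{a}_Q$; absorbing that $\mathrm{SL}_2(\mathbb{Z})$-factor into $u$ gives $h(\theta_K)^{\sigma_Q}=h^{u_Q}(\theta_Q)$ for a well-defined $u_Q\in\mathrm{GL}_2(\mathbb{Z}/p\mathbb{Z})/\{\pm1_2\}$. A direct computation of $u_Q$ from the local components of $x_Q$ then produces the three matrices in the statement: if $p\nmid a$, the idele $x_Q$ is a unit at $p$ and only the change of basis from $\{\theta_K,1\}$ to $\{\tfrac{-b+\sqrt{d_K}}{2},a\}$ enters, giving $\left(\begin{smallmatrix}a&b/2\\ 0&1\end{smallmatrix}\right)$ or $\left(\begin{smallmatrix}a&(3+b)/2\\ 0&1\end{smallmatrix}\right)$ according to whether $d_K\equiv0$ or $1\pmod4$; if $p\mid a$, one first replaces the $p$-component of $x_Q$ by a generator of its $p$-part better adapted to the situation — possible because $p\nmid c$, or because $p\nmid b$ when in addition $p\mid c$ (which is forced by $\gcd(a,b,c)=1$) — and the two remaining matrices drop out of the same calculation.

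I expect the main obstacle to be precisely this last bookkeeping: fixing the normalization in Shimura's reciprocity law (the position of the inverse and the identification $[x_Q,K]|_{H_K}=\sigma_Q$), choosing the $p$-component of $x_Q$ correctly in each of the three cases, and keeping track of the $\mathrm{SL}_2(\mathbb{Z})$-adjustment taking $\beta\theta_K$ to the reduced point $\theta_Q$; all of this is worked out in \cite[$\S$3, 6]{Stevenhagen} and \cite[pp.50--51]{Gee}, on which the argument would lean. Finally, the clause ``possibly with some multiplicity'' reflects that $Q\mapsto\sigma_Q$ is a bijection of $\mathrm{C}(d_K)$ onto $\mathrm{Gal}(H_K/K)$, so the multiset $\{h^{u_Q}(\theta_Q)\}_{Q\in\mathrm{C}(d_K)}$ runs over the full $\mathrm{Gal}(H_K/K)$-orbit of $h(\theta_K)$ with every conjugate appearing $[H_K:K(h(\theta_K))]$ times.
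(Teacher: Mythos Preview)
Your sketch is correct and follows exactly the approach the paper relies on: the paper's own proof is simply a reference to \cite[\S6]{Stevenhagen} and \cite[Lemma 20]{Gee}, and what you have written is a faithful outline of the idelic argument developed there (choice of an idele representing $\mathfrak{a}_Q$, Shimura's reciprocity via $q_{\theta_K}$, factorization in $\mathrm{GL}_2(\widehat{\mathbb{Z}})\cdot\mathrm{GL}_2^+(\mathbb{Q})$, and the case-by-case computation of $u_Q$). There is nothing to add beyond noting that you have effectively unpacked the citation the paper leaves as a black box.
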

\begin{proof}
See \cite[$\S$6]{Stevenhagen} or \cite[Lemma 20]{Gee}.
\end{proof}

\section{Generators of ray class fields}\label{sec3}

Let $K$ be an imaginary quadratic field. For an integer $N$
($\geq2$) we get
\begin{equation*}
g_{(N)}(C_0)=g_{
\left[\begin{smallmatrix}0\\1/N\end{smallmatrix}\right]}(\tau_K)^{12N}
\end{equation*}
by the definition (\ref{S-R}). In this section we shall show that it
plays a role of primitive generator of $K_{(N)}$ over $H_K$ (or,
even over $K$).

\begin{lemma}\label{function}
Let $\left[\begin{matrix}s\\t\end{matrix}\right]\in\mathbb{Z}^2\setminus N\mathbb{Z}^2$  for an integer $N$
\textup{(}$\geq2$\textup{)}. If $\left[\begin{matrix}s\\t\end{matrix}\right]\not\equiv\pm
\left[\begin{matrix}0\\1\end{matrix}\right]\Mod{N}$,
then $g_{\left[\begin{smallmatrix}0\\1/N\end{smallmatrix}\right]}(\tau)^{12N}\neq g_{\left[\begin{smallmatrix}s/N\\t/N\end{smallmatrix}\right]}(\tau)^{12N}$.
\end{lemma}
\begin{proof}
Assume on the contrary that $g_{\left[\begin{smallmatrix}0\\1/N\end{smallmatrix}\right]}(\tau)^{12N}=
g_{\left[\begin{smallmatrix}s/N\\t/N\end{smallmatrix}\right]}(\tau)^{12N}$. Since
\begin{equation*}
\mathrm{ord}_q~
g_{\left[\begin{smallmatrix}0\\1/N\end{smallmatrix}\right]}(\tau)^{12N}
=6N\mathbf{B}_2(0)=\mathrm{ord}_q~
g_{\left[\begin{smallmatrix}s/N\\t/N\end{smallmatrix}\right]}(\tau)^{12N}=6N\mathbf{B}_2(\langle s/N\rangle)
\end{equation*}
by Proposition \ref{transform} (i), we must have $s\equiv0\Mod{N}$
by the graph of $\mathbf{B}_2(X)=X^2-X+1/6$. And, since
\begin{eqnarray*}
&&\mathrm{ord}_q~(g_{\left[\begin{smallmatrix}0\\1/N\end{smallmatrix}\right]}(\tau)^{12N})^{\left[
\begin{smallmatrix}0&-1\\1&0\end{smallmatrix}\right]}=
\mathrm{ord}_q~g_{\left[\begin{smallmatrix}1/N\\0\end{smallmatrix}\right]}(\tau)^{12N}=
6N\mathbf{B}_2(1/N)\\
&=&\mathrm{ord}_q~(g_{\left[\begin{smallmatrix}0\\t/N\end{smallmatrix}\right]}(\tau)^{12N})^{\left[
\begin{smallmatrix}0&-1\\1&0\end{smallmatrix}\right]}=
\mathrm{ord}_q~g_{\left[\begin{smallmatrix}t/N\\0\end{smallmatrix}\right]}(\tau)^{12N}= 6N\mathbf{B}_2(\langle
t/N\rangle)
\end{eqnarray*}
by Proposition \ref{transform} (ii) and (i), it follows that
$t\equiv\pm1\Mod{N}$. This proves the lemma.
\end{proof}

\begin{lemma}\label{j}
\begin{itemize}
\item[\textup{(i)}] $j(\tau)$ induces a bijective map $j:\mathrm{SL}_2(\mathbb{Z})\backslash\mathbb{H}
\rightarrow\mathbb{C}$.
\item[\textup{(ii)}] If $K_1$ and $K_2$ are distinct imaginary quadratic
fields, then $\tau_{K_1}$ and $\tau_{K_2}$ are not equivalent
under the action of $\mathrm{SL}_2(\mathbb{Z})$.
\end{itemize}
\end{lemma}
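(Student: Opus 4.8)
The plan is to treat the two parts separately: (i) is the classical statement that $j$ parametrises the modular curve $X(1)$, and (ii) then follows quickly from it (or, more directly, without it at all).

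For (i), recall that $j(\tau)\in\mathcal{F}_1$ is holomorphic on $\mathbb{H}$ and invariant under $\mathrm{SL}_2(\mathbb{Z})$, so it induces a well-defined map $\mathrm{SL}_2(\mathbb{Z})\backslash\mathbb{H}\to\mathbb{C}$, and one must show this map is both injective and surjective. I would fix $c\in\mathbb{C}$ and count the zeros of the modular function $j(\tau)-c$ in the standard fundamental domain $\mathcal{D}=\{\tau\in\mathbb{H}~;~\abs{\tau}\geq1,~\abs{\mathrm{Re}(\tau)}\leq1/2\}$. By the Fourier expansion (\ref{jFourier}), $j-c$ is a weight-$0$ meromorphic modular function whose only singularity is a simple pole at the cusp $q=0$; integrating its logarithmic derivative around $\partial\mathcal{D}$ (the valence formula for $\mathrm{SL}_2(\mathbb{Z})$) shows that the number of zeros of $j-c$ in $\mathcal{D}$, counted with the usual weights $1/2$ at $i$ and $1/3$ at $e^{2\pi i/3}$, equals $1$. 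Hence $j-c$ has exactly one zero modulo $\mathrm{SL}_2(\mathbb{Z})$, which is precisely the asserted bijectivity. (Alternatively this is entirely standard and may simply be cited, e.g.\ from \cite{Shimura}.)

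For (ii), suppose to the contrary that $\theta_{K_1}=\gamma\cdot\theta_{K_2}$ for some $\gamma=\left(\begin{smallmatrix}a&b\\c&d\end{smallmatrix}\right)\in\mathrm{SL}_2(\mathbb{Z})$. Since $\theta_{K_2}$ is a quadratic irrationality we have $c\theta_{K_2}+d\neq0$, so $\gamma\cdot\theta_{K_2}=(a\theta_{K_2}+b)/(c\theta_{K_2}+d)\in\mathbb{Q}(\theta_{K_2})=K_2$, the last equality because $\theta_{K_2}$ generates $\mathcal{O}_{K_2}$ (hence $K_2$) over $\mathbb{Z}$, as observed right after (\ref{theta}). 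Thus $\theta_{K_1}\in K_2$, so $K_1=\mathbb{Q}(\theta_{K_1})\subseteq K_2$, and comparing degrees over $\mathbb{Q}$ forces $K_1=K_2$, a contradiction. One could also phrase this through (i): an $\mathrm{SL}_2(\mathbb{Z})$-equivalence between $\theta_{K_1}$ and $\theta_{K_2}$ gives $j(\theta_{K_1})=j(\theta_{K_2})$, hence a homothety between the lattices $\mathbb{Z}+\mathbb{Z}\theta_{K_1}$ and $\mathbb{Z}+\mathbb{Z}\theta_{K_2}$, which identifies their endomorphism orders and again yields $K_1=K_2$.

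I do not expect a genuine obstacle. Part (i) rests entirely on the classical valence formula, and the only point in (ii) requiring a moment's care is that the fractional linear transformation is actually defined at $\theta_{K_2}$, which is immediate from $\theta_{K_2}\notin\mathbb{Q}$. The real decision is expository rather than mathematical: how much of the standard $j$-function theory to reproduce versus to cite.
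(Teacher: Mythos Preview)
Your proposal is correct. The paper itself gives no argument at all: it simply cites \cite[Chapter 3 Theorem 4]{Lang} for (i) and \cite[Chapter 3 Theorem 1]{Lang} for (ii). So your contribution is to actually supply proofs where the authors defer to the literature. Your valence-formula sketch for (i) is the standard one and is fine; your direct argument for (ii), observing that a fractional linear transformation with integer entries sends $\theta_{K_2}$ into $\mathbb{Q}(\theta_{K_2})=K_2$ and then comparing degrees, is clean and entirely self-contained, and in fact does not rely on (i) at all. The only ``difference'' to report is one of presentation: the paper treats both facts as black boxes, whereas you unpack them. Your closing remark that the real decision is expository rather than mathematical is exactly right.
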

\begin{proof}
(i) See \cite[Chapter 3, Theorem 4]{Lang}.\\
(ii) See \cite[Chapter 3, Theorem 1]{Lang}.
\end{proof}

For a real number $x$ we denote by $[x]$ the
greatest integer that is less than or equal to $x$.

\begin{theorem}\label{overH}
For a given integer $N$ \textup{(}$\geq2$\textup{)} we have
\begin{eqnarray*}
&&\#\{\textrm{imaginary quadratic fields
$K$}~|~\textrm{$g_{\left[\begin{smallmatrix}0\\1/N\end{smallmatrix}\right]}(\tau_K)^{12N}$ does not generate
$K_{(N)}$ over $H_K$}\}\\
&\leq&\left\{\begin{array}{ll}
12 & \textrm{if}~N=2,\vspace{0.1cm}\\
((N+1)[N/2]-1)(N^5/4)\prod_{p|N}(1-p^{-1})(1-p^{-2}) &
\textrm{if}~N\geq3.
\end{array}\right.
\end{eqnarray*}
\end{theorem}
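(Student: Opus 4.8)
The plan is to reformulate the failure of generation by means of Shimura's reciprocity law, and then to bound the resulting set of zeros by descending to $\mathcal{F}_1=\mathbb{Q}(j(\tau))$.

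Suppose $K$ is an imaginary quadratic field for which $g_{(0,1/N)}(\theta_K)^{12N}=g_{(N)}(C_0)$ does not generate $K_{(N)}$ over $H_K$. Since $g_{(0,1/N)}(\tau)^{12N}$ is a modular unit it is finite and nonzero at $\theta_K\in\mathbb{H}$, so $g_{(0,1/N)}(\theta_K)^{12N}\in K_{(N)}$; by assumption its stabilizer in $\mathrm{Gal}(K_{(N)}/H_K)$ is nontrivial, so Proposition \ref{Gee} produces a matrix $\alpha=\left(\begin{smallmatrix}t-Bs&-Cs\\ s&t\end{smallmatrix}\right)\in W_{N,\theta_K}\setminus\mathrm{Ker}_{N,\theta_K}$ fixing it. By Proposition \ref{transform}(v) the action of $\alpha$ carries $g_{(0,1/N)}(\tau)^{12N}$ to $g_{(s/N,t/N)}(\tau)^{12N}$, whence
\[
g_{(s/N,t/N)}(\theta_K)^{12N}=g_{(0,1/N)}(\theta_K)^{12N}.
\]
Because $\pm1_2\in\mathrm{Ker}_{N,\theta_K}$ in each of the three cases of Proposition \ref{Gee}, we have $\alpha\neq\pm1_2$ in $\mathrm{GL}_2(\mathbb{Z}/N\mathbb{Z})/\{\pm1_2\}$, which forces $(s,t)\not\equiv\pm(0,1)\pmod{N}$; moreover $(s,t)\not\equiv(0,0)\pmod{N}$ since $\alpha$ is invertible. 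Thus $\theta_K$ is a zero of
\[
h_{s,t}(\tau):=g_{(0,1/N)}(\tau)^{12N}-g_{(s/N,t/N)}(\tau)^{12N}\in\mathcal{F}_N,
\]
which is a nonzero function by Lemma \ref{function}.

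By Proposition \ref{transform}(iv), $g_{(s/N,t/N)}(\tau)^{12N}$, hence $h_{s,t}$, depends only on the class $\pm(s,t)\pmod{N}$. Let $S$ consist of the pairs $(s,t)$ with $1\leq s\leq[N/2]$ and $0\leq t\leq N-1$, together with the pairs $(0,t)$ with $2\leq t\leq[N/2]$; a direct count gives $\#S=(N+1)[N/2]-1$, and $S$ contains a representative of every class $\pm(s,t)\pmod{N}$ with $(s,t)\not\equiv(0,0),\pm(0,1)$. Consequently every offending field $K$ satisfies $h_{s,t}(\theta_K)=0$ for some $(s,t)\in S$, and it is enough to prove that for each fixed $(s,t)\in S$ one has $\#\{K~;~h_{s,t}(\theta_K)=0\}\leq(N/2)[\mathcal{F}_N:\mathcal{F}_1]$; the asserted bound then follows from \eqref{size} (for $N=2$ one uses $[\mathcal{F}_2:\mathcal{F}_1]=6$, so $2\cdot1\cdot6=12$).

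Fix $(s,t)\in S$ and form the monic polynomial
\[
\Phi_{s,t}(X)=\prod_{\sigma\in\mathrm{Gal}(\mathcal{F}_N/\mathcal{F}_1)}\bigl(X-h_{s,t}^{\,\sigma}\bigr)\in\mathcal{F}_1[X]
\]
of degree $[\mathcal{F}_N:\mathcal{F}_1]$. By Proposition \ref{transform}(v) each conjugate $h_{s,t}^{\,\sigma}$ is again a difference of two $12N$-th powers of Siegel functions, hence holomorphic on $\mathbb{H}$; and $h_{s,t}^{\,\sigma}\neq0$ since $\sigma$ is an automorphism and $h_{s,t}\neq0$. Therefore $\Phi_{s,t}(0)=\pm\prod_{\sigma}h_{s,t}^{\,\sigma}$ is a nonzero element of $\mathcal{F}_1=\mathbb{Q}(j(\tau))$ holomorphic on $\mathbb{H}$; as $j$ maps $\mathbb{H}$ onto $\mathbb{C}$ (Lemma \ref{j}(i)) it must be a polynomial $P_{s,t}(j(\tau))$ with $0\neq P_{s,t}\in\mathbb{Q}[Y]$. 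If $h_{s,t}(\theta_K)=0$ the factor indexed by $\sigma=\mathrm{id}$ vanishes at $\theta_K$, so $P_{s,t}(j(\theta_K))=0$; and distinct imaginary quadratic fields give distinct values $j(\theta_K)$ by Lemma \ref{j}. Hence, using $j(\tau)=q^{-1}+744+\cdots$,
\[
\#\{K~;~h_{s,t}(\theta_K)=0\}\leq\deg P_{s,t}=-\mathrm{ord}_q\,\Phi_{s,t}(0)=-\sum_{\sigma}\mathrm{ord}_q\,h_{s,t}^{\,\sigma}.
\]
Finally, by Proposition \ref{transform}(i) every $g_{(r_1,r_2)}(\tau)^{12N}$ has $q$-order $6N\mathbf{B}_2(\langle r_1\rangle)\geq6N\mathbf{B}_2(1/2)=-N/2$, so $\mathrm{ord}_q\,h_{s,t}^{\,\sigma}\geq-N/2$ for every $\sigma$, giving $\deg P_{s,t}\leq(N/2)[\mathcal{F}_N:\mathcal{F}_1]$, as required.

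The step I expect to need the most care is the first one: one must check that the matrix $\alpha$ produced by Proposition \ref{Gee} really has the displayed shape inside $W_{N,\theta_K}$, and that $\alpha\notin\mathrm{Ker}_{N,\theta_K}$ genuinely rules out $(s,t)\equiv\pm(0,1)\pmod{N}$ even for the exceptional fields $\mathbb{Q}(\sqrt{-1})$ and $\mathbb{Q}(\sqrt{-3})$ — which it does, because $\pm1_2$ always belongs to the kernel. Once the problem has been translated into counting zeros of the $h_{s,t}$, the estimate is essentially forced: forming the norm $\prod_{\sigma}h_{s,t}^{\,\sigma}$ over $\mathcal{F}_1$ turns $h_{s,t}$ into a polynomial in $j(\tau)$, and the bound $\mathbf{B}_2(x)\geq\mathbf{B}_2(1/2)=-1/12$ controls the $q$-order of each conjugate, hence the degree of that polynomial, by $(N/2)[\mathcal{F}_N:\mathcal{F}_1]$; working instead on the curve $X(N)$ directly would require its cusp structure.
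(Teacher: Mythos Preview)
Your proof is correct and follows essentially the same approach as the paper: translate non-generation via Shimura's reciprocity into the vanishing of some $h_{s,t}$, take the norm down to $\mathcal{F}_1$ to obtain a polynomial in $j(\tau)$, and bound its degree through the $q$-order estimate $\mathbf{B}_2(x)\geq\mathbf{B}_2(1/2)=-1/12$. The only cosmetic difference is that the paper invokes integrality over $\mathbb{Z}[j(\tau)]$ (Proposition~\ref{transform}(vi)) to see that the norm is a polynomial in $j$, whereas you argue via holomorphicity on $\mathbb{H}$ and the surjectivity of $j$ from Lemma~\ref{j}(i).
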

\begin{proof}
Let
\begin{equation*}
S=\bigg\{\left[\begin{matrix}s\\t\end{matrix}\right]\in\mathbb{Z}^2~|~(s=0,~2\leq t\leq[N/2])~\textrm{or}~
(1\leq s\leq[N/2],~0\leq t\leq N-1)\bigg\},
\end{equation*}
which consists of $(N+1)[N/2]-1$ elements. For each $\left[\begin{matrix}s\\t
\end{matrix}\right]\in S$ we
consider the function
\begin{equation*}
g(\tau)=g_{\left[\begin{smallmatrix}0\\1/N\end{smallmatrix}\right]}(\tau)^{12N}-
g_{\left[\begin{smallmatrix}s/N\\t/N\end{smallmatrix}\right]}(\tau)^{12N}
\quad(\in\mathcal{F}_N),
\end{equation*}
which is nonzero by Lemma \ref{function}. Since $g(\tau)$ is
integral over $\mathbb{Z}[j(\tau)]$ by Proposition
\ref{transform} (vi), we have
\begin{equation}\label{poly}
\mathbf{N}_{\mathcal{F}_N/\mathcal{F}_1}(g(\tau))=g(\tau)\prod_{\sigma\neq\mathrm{id}}
g(\tau)^\sigma=P(j(\tau)) \quad \textrm{for some nonzero polynomial
$P(X)\in\mathbb{Z}[X]$.}
\end{equation}
Note by Proposition \ref{transform} (v) that any conjugate of
$g(\tau)$ under the action of
$\mathrm{Gal}(\mathcal{F}_N/\mathcal{F}_1)$ is of the form
\begin{equation*}
g_{\left[\begin{smallmatrix}a/N\\b/N\end{smallmatrix}\right]}(\tau)^{12N}-
g_{\left[\begin{smallmatrix}c/N\\d/N\end{smallmatrix}\right]}(\tau)^{12N} \quad\textrm{for
some}~\left[\begin{matrix}a\\b\end{matrix}\right],
\left[\begin{matrix}c\\d\end{matrix}\right]\in\mathbb{Z}^2-N\mathbb{Z}^2,
\end{equation*}
which is holomorphic on $\mathbb{H}$. Now, let
\begin{equation*}
Z_{\left[\begin{smallmatrix}s\\t\end{smallmatrix}\right]}=\{\textrm{imaginary quadratic fields}~K~|~g(\tau_K)=0\}.
\end{equation*}
If $K\in Z_{\left[\begin{smallmatrix}s\\t\end{smallmatrix}\right]}$, then (\ref{poly}) gives $P(j(\tau_K))=0$,
from which we obtain by Lemma \ref{j}
\begin{equation*}
\#Z_{\left[\begin{smallmatrix}s\\t\end{smallmatrix}\right]}\leq\deg P(X).
\end{equation*}
On the other hand, since
\begin{eqnarray*}
&&\mathrm{ord}_q~( g_{\left[\begin{smallmatrix}a/N\\b/N\end{smallmatrix}\right]}(\tau)^{12N}-
g_{\left[\begin{smallmatrix}c/N\\d/N\end{smallmatrix}\right]}(\tau)^{12N})\\
&\geq& \min\big\{6N\mathbf{B}_2(\langle a/N\rangle),6N
\mathbf{B}_2(\langle c/N\rangle)\big\}\quad\textrm{by Proposition
\ref{transform} (i)}\\
&\geq&6N\mathbf{B}_2(1/2) \quad\textrm{by
the graph of $\mathbf{B}_2(X)=X^2-X+1/6$}\\
&=&-N/2,
\end{eqnarray*}
we deduce that
\begin{eqnarray*}
\mathrm{ord}_q~P(j(\tau))&=&
\mathrm{ord}_q~\mathbf{N}_{\mathcal{F}_N/\mathcal{F}_1}(g(\tau))\\
&\geq& -(N/2)\cdot[\mathcal{F}_N:\mathcal{F}_1]\\
&=& -(N/2)\cdot\#\mathrm{GL}_2(\mathbb{Z}/N\mathbb{Z})/\{\pm I_2\}
\quad\textrm{by (\ref{Gal(F_N/F_1)})}\\
&=&\left\{\begin{array}{ll} -6 & \textrm{if}~N=2,\vspace{0.1cm}\\
-(N^5/4)\prod_{p|N}(1-p^{-1})(1-p^{-2}) & \textrm{if}~N\geq3
\end{array}\right.\quad\textrm{by (\ref{size})}.
\end{eqnarray*}
Thus we get from the fact $\mathrm{ord}_q~j(\tau)=-1$ that
\begin{equation*}
\#Z_{\left[\begin{smallmatrix}s\\t\end{smallmatrix}\right]}\leq\deg P(X)\leq \left\{\begin{array}{ll}
6 & \textrm{if}~N=2,\vspace{0.1cm}\\
(N^5/4)\prod_{p|N}(1-p^{-1})(1-p^{-2}) & \textrm{if}~N\geq3.
\end{array}\right.
\end{equation*}
And, if we let
\begin{equation*}
Z=\bigcup_{\left[\begin{smallmatrix}s\\t\end{smallmatrix}\right]\in S}Z_{\left[\begin{smallmatrix}s\\t\end{smallmatrix}\right]},
\end{equation*}
then
\begin{eqnarray*}
\#Z\leq\sum_{\left[\begin{smallmatrix}s\\t\end{smallmatrix}\right]\in S}
\#Z_{\left[\begin{smallmatrix}s\\t\end{smallmatrix}\right]} &\leq& \#S\cdot \max_{\left[\begin{smallmatrix}s\\t\end{smallmatrix}\right]\in
S}\{\#Z_{\left[\begin{smallmatrix}s\\t\end{smallmatrix}\right]}\}\\
&\leq& \left\{\begin{array}{ll}
12 & \textrm{if}~N=2,\vspace{0.1cm}\\
((N+1)[N/2]-1)(N^5/4)\prod_{p|N}(1-p^{-1})(1-p^{-2}) &
\textrm{if}~N\geq3.
\end{array}\right.
\end{eqnarray*}
\par
Now, let $K$ be an imaginary quadratic field lying outside $Z$. Then
the singular value $g_{\left[\begin{smallmatrix}0\\1/N\end{smallmatrix}\right]}(\tau_K)^{12N}$ generates $K_{(N)}$
over $H_K$. Indeed, suppose that it does not generate $K_{(N)}$ over
$H_K$. Then there exists a non-identity element
$\alpha=\left[\begin{matrix}t-Bs &
-Cs\\s&t\end{matrix}\right]$ of
$W_{N,\tau_K}/\mathrm{Ker}_{N,\tau_K}$
($\simeq\mathrm{Gal}(K_{(N)}/H_K)$) in Proposition \ref{Gee} which
fixes $g_{\left[\begin{smallmatrix}0\\1/N\end{smallmatrix}\right]}(\tau_K)^{12N}$. Here we may assume that $\left[\begin{matrix}s\\t\end{matrix}\right]$
belongs to $S$ because $W_{N,\tau_K}/\mathrm{Ker}_{N,\tau_K}$ is
a subgroup or a quotient of
$\mathrm{GL}_2(\mathbb{Z}/N\mathbb{Z})/\{\pm I_2\}$. We derive that
\begin{eqnarray*}
0&=&g_{\left[\begin{smallmatrix}0\\1/N\end{smallmatrix}\right]}(\tau_K)^{12N}
-(g_{\left[\begin{smallmatrix}0\\1/N\end{smallmatrix}\right]}(\tau_K)^{12N})^\alpha\\
&=& g_{\left[\begin{smallmatrix}0\\1/N\end{smallmatrix}\right]}(\tau_K)^{12N}
-(g_{\left[\begin{smallmatrix}0\\1/N\end{smallmatrix}\right]}(\tau)^{12N})^\alpha(\tau_K)\quad\textrm{by
Proposition \ref{Gee}}\\
&=&g_{\left[\begin{smallmatrix}0\\1/N\end{smallmatrix}\right]}(\tau_K)^{12N}-
g_{\left[\begin{smallmatrix}s/N\\t/N\end{smallmatrix}\right]}(\tau_K)^{12N}\quad\textrm{by Proposition
\ref{transform} (iv) and (v)}.
\end{eqnarray*}
But this implies that $K$ belongs to $Z_{\left[\begin{smallmatrix}s\\t\end{smallmatrix}\right]}$ ($\subseteq Z$),
which yields a contradiction. Therefore we conclude that
\begin{equation*}
\{\textrm{imaginary quadratic fields
$K$}~|~\textrm{$g_{\left[\begin{smallmatrix}0\\1/N\end{smallmatrix}\right]}(\tau_K)^{12N}$ does not generate
$K_{(N)}$ over $H_K$}\}\subseteq Z.
\end{equation*}
This completes the proof.
\end{proof}

Let $K$ be an imaginary quadratic field and $\mathfrak{f}$ be a nontrivial
ideal of $\mathcal{O}_K$. For a character $\chi$ of
$\mathrm{Cl}(\mathfrak{f})$ we let $\mathfrak{f}_\chi$ be the
conductor of $\chi$ and $\chi_0$ be the proper character of
$\mathrm{Cl}(\mathfrak{f}_\chi)$ corresponding to $\chi$. If
$\mathfrak{f}\neq\mathcal{O}_K$ and $\chi$ is also a nontrivial
character of $\mathrm{Cl}(\frak{f})$, then we define the
\textit{Stickelberger element}
\begin{equation*} S_\mathfrak{f}(\chi,g_\mathfrak{f})
=\sum_{C\in\mathrm{Cl}(\mathfrak{f})}
\chi(C)\log|g_\mathfrak{f}(C)|,
\end{equation*}
and the \textit{$L$-function}
\begin{equation*}
L_\mathfrak{f}(s,\chi)=\sum_{\mathfrak{a}} \frac{\chi([\mathfrak{a}])}{\mathbf{N}_{K/\mathbb{Q}}(\mathfrak{a})^s}\quad(s\in\mathbb{C}),
\end{equation*}
where $\mathfrak{a}$ runs over all nontrivial ideals of $\mathcal{O}_K$
relatively prime to $\mathfrak{f}$ and $[\mathfrak{a}]$ is the class containing $\mathfrak{a}$.

\begin{proposition}[The second Kronecker limit formula]\label{LandS}
If $\mathfrak{f}_\chi\neq\mathcal{O}_K$, then we have
\begin{equation*}
L_{\mathfrak{f}_\chi}(1,\chi_0)
\prod_{\mathfrak{p}\,|\,\mathfrak{f},~\mathfrak{p}\,\nmid\,\mathfrak{f}_\chi}
(1-\overline{\chi}_0([\mathfrak{p}]))=-\frac{\pi\chi_0([\gamma\mathfrak{d}_K\mathfrak{f}_\chi])}{3N(\mathfrak{f}_\chi)\sqrt{-d_K}\omega(\mathfrak{f}_\chi)
T_\gamma(\overline{\chi}_0)}~S_{\mathfrak{f}}(\overline{\chi},g_\mathfrak{f}),
\end{equation*}
where $\mathfrak{d}_K$ is
the different of $K/\mathbb{Q}$, $\gamma$ is a nonzero element of $K$ so that
$\gamma\mathfrak{d}_K\mathfrak{f}_\chi$ becomes an ideal of $\mathcal{O}_K$ relatively prime to $\mathfrak{f}_\chi$,
$N(\mathfrak{f}_\chi)$ is the smallest positive integer in $\mathfrak{f}_\chi$,
 $\omega(\mathfrak{f}_\chi)
=|\{\zeta\in\mathcal{O}_K^\times~|~\zeta\equiv1\Mod{\mathfrak{f}_\chi}\}|$ and
\begin{eqnarray*}
T_\gamma(\overline{\chi}_0)=
\sum_{x+\mathfrak{f}_\chi\in\pi_{\mathfrak{f}_\chi}(\mathcal{O}_K)^\times}
\overline{\chi}_0([x\mathcal{O}_K])e^{2\pi
i\mathrm{Tr}_{K/\mathbb{Q}}(x\gamma)}.
\end{eqnarray*}
\end{proposition}
\begin{proof}
See \cite[Chapter 22, Theorems 1 and 2]{Lang} and \cite[Chapter 11, Theorem
2.1]{K-L}.
\end{proof}

\begin{remark}\label{remarkLandS}
\begin{itemize}
\item[(i)]
The Euler factor
$\prod_{\mathfrak{p}\,|\,\mathfrak{f},~\mathfrak{p}\,\nmid\,\mathfrak{f}_\chi}
(1-\overline{\chi}_0([\mathfrak{p}]))$ is understood to be $1$ if
there is no prime ideal $\mathfrak{p}$ such that
$\mathfrak{p}\,|\,\mathfrak{f}$ and
$\mathfrak{p}\nmid\mathfrak{f}_\chi$.
\item[(ii)] As is well-known,  $L_{\mathfrak{f}_\chi}(1,\chi_0)\neq0$
\cite[Chapter IV, Proposition 5.7]{Janusz}.
\end{itemize}
\end{remark}

\begin{theorem}\label{overK}
Let $K$ be an imaginary quadratic field and $\mathfrak{f}$ be a
nontrivial proper ideal of $\mathcal{O}_K$ whose prime ideal factorization is given by
\begin{equation*}
\mathfrak{f}=\prod_{k=1}^n \mathfrak{p}_k^{e_k}.
\end{equation*}
Assume that
\begin{equation}\label{hypothesis}
[K_\mathfrak{f}:K]>2\sum_{k=1}^n
[K_{\mathfrak{f}\mathfrak{p}_k^{-e_k}}:K].
\end{equation}
Then $g_\mathfrak{f}(C_0)$ generates $K_\mathfrak{f}$ over $K$.
\end{theorem}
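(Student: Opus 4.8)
The plan is to argue by contradiction: suppose $g_\mathfrak{f}(C_0)$ does \emph{not} generate $K_\mathfrak{f}$ over $K$. Since $K_\mathfrak{f}/K$ is abelian with Galois group $\mathrm{Cl}(\mathfrak{f})$ via the Artin map $\sigma$, the subfield $K(g_\mathfrak{f}(C_0))$ corresponds to a nontrivial subgroup $H \leq \mathrm{Cl}(\mathfrak{f})$, namely the stabilizer of $g_\mathfrak{f}(C_0)$. By the transformation formula \eqref{Artin}, $g_\mathfrak{f}(C_0)^{\sigma(C)} = g_\mathfrak{f}(C)$, so $g_\mathfrak{f}(C)=g_\mathfrak{f}(C_0)$ for every $C\in H$. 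Dualizing, there is a nontrivial character $\chi$ of $\mathrm{Cl}(\mathfrak{f})$ that is trivial on $H$; equivalently, $\chi$ factors through the (nontrivial) quotient $\mathrm{Cl}(\mathfrak{f})/H$ on which the invariants $g_\mathfrak{f}(C)$ depend. I want to feed such a $\chi$ into the Kronecker limit formula (Proposition \ref{LandS}) to force the Stickelberger element $S_\mathfrak{f}(\overline\chi, g_\mathfrak{f})$ to vanish, contradicting $L_{\mathfrak{f}_\chi}(1,\chi_0)\neq 0$ (Remark \ref{remarkLandS}(ii)). The point is: if $g_\mathfrak{f}(C)$ is constant on the cosets of $H$, and $\chi$ is trivial on $H$ but we can also arrange that the \emph{log absolute value} combination telescopes to zero.

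More precisely, here is the counting step. Among the nontrivial characters of $\mathrm{Cl}(\mathfrak{f})$ that are trivial on $H$, I want to find one whose conductor $\mathfrak{f}_\chi$ is the \emph{full} conductor, i.e.\ $\mathfrak{f}_\chi = \mathfrak{f}$ (or at least $\mathfrak{f}_\chi$ nontrivial, with no $\mathfrak{p}\mid\mathfrak{f}$, $\mathfrak{p}\nmid\mathfrak{f}_\chi$ killing the Euler factor — but cleanest is $\mathfrak{f}_\chi=\mathfrak{f}$). A character $\chi$ fails to have conductor $\mathfrak{f}$ exactly when it factors through $\mathrm{Cl}(\mathfrak{f}\mathfrak{p}_k^{-1})$ for some $k$, equivalently it is trivial on the kernel $\mathrm{Cl}(\mathfrak{f})\to\mathrm{Cl}(\mathfrak{f}\mathfrak{p}_k^{-1})$, even on $\mathrm{Cl}(\mathfrak f)\to \mathrm{Cl}(\mathfrak f \mathfrak p_k^{-e_k})$. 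The characters trivial on $H$ form a group of order $[\mathrm{Cl}(\mathfrak{f}):H]=[K(g_\mathfrak{f}(C_0)):K]\geq 2$; but actually I should take $H$ as large as possible, so this dual group could have order as small as $2$. That is too weak on its own. Instead, I think the right move is: the characters $\chi$ of $\mathrm{Cl}(\mathfrak{f})$ that are constant-producing on $g_\mathfrak{f}$ — i.e.\ trivial on $H$ — number $[K_\mathfrak{f}:K(g_\mathfrak{f}(C_0))]$, hmm, that's the wrong direction. Let me re-think: I should count characters that distinguish the invariants. The group of \emph{all} characters has order $[K_\mathfrak{f}:K]$; those that factor through some $\mathrm{Cl}(\mathfrak{f}\mathfrak{p}_k^{-e_k})$ number at most $\sum_k [K_{\mathfrak{f}\mathfrak{p}_k^{-e_k}}:K]$. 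Hypothesis \eqref{hypothesis} says $[K_\mathfrak{f}:K] > 2\sum_k[K_{\mathfrak{f}\mathfrak{p}_k^{-e_k}}:K]$, so \emph{more than half} of all characters of $\mathrm{Cl}(\mathfrak{f})$ have full conductor $\mathfrak{f}$. Since the characters trivial on $H$ are a subgroup of index $[K(g_\mathfrak f(C_0)):K]\geq 2$, a counting/coset argument (a proper subgroup together with a coset cannot contain a set of size $>$ half the group unless\ldots) — more directly: the characters of full conductor are the complement of a set of size $< \tfrac12 |\mathrm{Cl}(\mathfrak f)|$, hence have size $>\tfrac12|\mathrm{Cl}(\mathfrak f)|$, hence cannot all lie in the index-$2$-or-more subgroup of characters trivial on $H$. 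So there exists a character $\chi$ of full conductor $\mathfrak f_\chi=\mathfrak f$ which is \emph{nontrivial on $H$} — wait, I need it trivial on $H$. Let me instead combine: I want $\chi$ nontrivial, $\mathfrak f_\chi=\mathfrak f$, and $\chi$ trivial on $H$. Hmm, "trivial on $H$" and "full conductor" pull against each other.

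Resolving this is the crux, and here is the fix I expect to work. Rather than fixing the stabilizer $H$, observe: if $g_\mathfrak{f}(C_0)$ does not generate, then there exist two distinct classes $C_1\neq C_2$ with $g_\mathfrak f(C_1)=g_\mathfrak f(C_2)$, hence $g_\mathfrak f(C)=g_\mathfrak f(C_0)$ for $C=C_1C_2^{-1}\neq C_0$. Then for \emph{every} character $\chi$,
\[
S_\mathfrak f(\overline\chi,g_\mathfrak f)=\sum_{C'}\overline\chi(C')\log|g_\mathfrak f(C')|
=\sum_{C'}\overline\chi(CC')\log|g_\mathfrak f(CC')|=\overline\chi(C)\,S_\mathfrak f(\overline\chi,g_\mathfrak f),
\]
so $(1-\overline\chi(C))S_\mathfrak f(\overline\chi,g_\mathfrak f)=0$; thus $S_\mathfrak f(\overline\chi,g_\mathfrak f)=0$ unless $\chi(C)=1$. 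Now pick, using the counting above, a nontrivial $\chi$ with $\mathfrak f_\chi=\mathfrak f$ (possible since such $\chi$'s number more than half, hence certainly at least one exists and in fact a nontrivial one since the trivial character has trivial conductor). If additionally $\chi(C)\neq 1$ for this choice we are done by Kronecker: $0=S_\mathfrak f(\overline\chi,g_\mathfrak f)$ forces $L_{\mathfrak f_\chi}(1,\chi_0)=L_\mathfrak f(1,\chi)=0$ (the Euler product over $\mathfrak p\mid\mathfrak f,\ \mathfrak p\nmid\mathfrak f_\chi$ is empty, $=1$, since $\mathfrak f_\chi=\mathfrak f$), contradicting Remark \ref{remarkLandS}(ii). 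To guarantee some full-conductor nontrivial $\chi$ with $\chi(C)\neq 1$: the characters with $\chi(C)=1$ form the subgroup annihilating $\langle C\rangle$, of order $[K_\mathfrak f:K]/\mathrm{ord}(C)\leq [K_\mathfrak f:K]/2$; those failing full conductor number $<\tfrac12[K_\mathfrak f:K]$ by \eqref{hypothesis}; their union has size $<[K_\mathfrak f:K]$, so outside it there is a $\chi$ that is nontrivial (it has full conductor, so in particular nontrivial), has $\mathfrak f_\chi=\mathfrak f$, and has $\chi(C)\neq1$. That is the contradiction, completing the proof. The main obstacle, as above, is this juggling of the two subgroups of $\widehat{\mathrm{Cl}(\mathfrak f)}$ — making sure hypothesis \eqref{hypothesis} with its factor $2$ is exactly enough to beat both "bad" subgroups simultaneously, and checking the degenerate edge cases (e.g.\ when $C$ has order $2$, or when several $\mathfrak p_k$ coincide in the bound), which I would dispatch with the cardinality inequality $|\mathrm{ann}\langle C\rangle|+\sum_k[K_{\mathfrak f\mathfrak p_k^{-e_k}}:K] < [K_\mathfrak f:K]$.
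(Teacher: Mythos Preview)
Your proposal is correct and follows essentially the same route as the paper: argue by contradiction, use a character-counting argument (hypothesis \eqref{hypothesis} against the size of an annihilator subgroup in $\widehat{\mathrm{Cl}(\mathfrak{f})}$) to produce a $\chi$ that is nontrivial on the stabilizer of $g_\mathfrak{f}(C_0)$ and has every $\mathfrak{p}_k\mid\mathfrak{f}_\chi$, then deduce $S_\mathfrak{f}(\overline{\chi},g_\mathfrak{f})=0$ and contradict $L_{\mathfrak{f}_\chi}(1,\chi_0)\neq0$ via Proposition~\ref{LandS}. One terminological slip to clean up: the bound $\sum_k[K_{\mathfrak{f}\mathfrak{p}_k^{-e_k}}:K]$ counts the characters with $\mathfrak{p}_k\nmid\mathfrak{f}_\chi$ for \emph{some} $k$, not those with $\mathfrak{f}_\chi\neq\mathfrak{f}$ (these differ when some $e_k\geq2$)---but only the former condition is needed to make the Euler product in Proposition~\ref{LandS} equal $1$, so your argument goes through unchanged once you replace ``full conductor'' by ``$\mathfrak{p}_k\mid\mathfrak{f}_\chi$ for all $k$'' throughout.
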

\begin{proof}
Set $F=K(g_\mathfrak{f}(C_0))$. We then derive that
\begin{eqnarray}\label{number1}
&&\#\{\textrm{characters}~\chi~\mathrm{of}~\mathrm{Gal}(K_\mathfrak{f}/K)~|~
\chi|_{\mathrm{Gal}(K_\mathfrak{f}/F)}\neq1\}\nonumber\\
&=&\#\{\textrm{characters}~\chi~
\textrm{of}~\mathrm{Gal}(K_\mathfrak{f}/K)\}
-\#\{\textrm{characters}~\chi~ \textrm{of}~\mathrm{Gal}(F/K)\}\nonumber\\
&=&[K_\mathfrak{f}:K]-[F:K].
\end{eqnarray}
Furthermore, we have
\begin{eqnarray}\label{number2}
&&\#\{\textrm{characters}~\chi~\textrm{of}~\mathrm{Gal}(K_\mathfrak{f}/K)
~|~\mathfrak{p}_k\nmid\mathfrak{f}_\chi~ \textrm{for
some}~k\}\nonumber\\
&=&\#\{\textrm{characters}~\chi~\textrm{of}~\mathrm{Gal}(K_\mathfrak{f}/K)
~|~\mathfrak{f}_\chi\,|\,\mathfrak{f}\mathfrak{p}_k^{-e_k}
~\textrm{for some}~k\}\nonumber\\
&\leq&\sum_{k=1}^n
\#\{\textrm{characters}~\chi~\textrm{of}~\mathrm{Gal}(K_{\mathfrak{f}\mathfrak{p}_k^{-e_k}}/K)\}\nonumber\\
&=&\sum_{k=1}^n[K_{\mathfrak{f}\mathfrak{p}_k^{-e_k}}:K].
\end{eqnarray}
\par
Now, suppose that $F$ is properly contained in $K_\mathfrak{f}$.
Then we get by the assumption (\ref{hypothesis}) that
\begin{eqnarray*}
[K_\mathfrak{f}:K]-[F:K]&=&[K_\mathfrak{f}:K]
(1-1/[K_\mathfrak{f}:F])\\
&>&2\sum_{k=1}^n
[K_{\mathfrak{f}\mathfrak{p}_k^{-e_k}}:K] (1-1/2)\\
&=&\sum_{k=1}^n
[K_{\mathfrak{f}\mathfrak{p}_k^{-e_k}}:K].
\end{eqnarray*}
This, together with (\ref{number1}) and (\ref{number2}), implies
that there exists a character $\chi$ of
$\mathrm{Gal}(K_\mathfrak{f}/K)$ such that
\begin{equation}\label{char1}
\chi|_{\mathrm{Gal}(K_\mathfrak{f}/F)}\neq1,
\end{equation}
\begin{equation}\label{char2}
\mathfrak{p}_k\,|\,\mathfrak{f}_\chi\quad\textrm{for all
$k=1,\ldots,n$}.
\end{equation}
Identifying $\mathrm{Cl}(\mathfrak{f})$ and
$\mathrm{Gal}(K_\mathfrak{f}/K)$ via the Artin reciprocity map, we obtain from
Proposition \ref{LandS} and (\ref{char2}) that
\begin{equation}\label{contradiction}
0\neq
L_{\mathfrak{f}_\chi}(1,\chi_0)=
-\frac{\pi\chi_0([\gamma\mathfrak{d}_K\mathfrak{f}_\chi])}{3N(\mathfrak{f}_\chi)\sqrt{-d_K}\omega(\mathfrak{f}_\chi)
T_\gamma(\overline{\chi}_0)}~S_{\mathfrak{f}}(\overline{\chi},g_\mathfrak{f}).
\end{equation}
On the other hand, we achieve that
\begin{eqnarray*}
S_\mathfrak{f}(\overline{\chi},g_\mathfrak{f})
&=&\sum_{C\in\mathrm{Cl}(\mathfrak{f})}\overline{\chi}(C)\log|g_\mathfrak{f}(C_0)^C|
\quad\textrm{by (\ref{Artin})}\\
&=&\sum_{\begin{smallmatrix}C_1\in\mathrm{Gal}(K_\mathfrak{f}/K)\\C_1
\Mod{\mathrm{Gal}(K_\mathfrak{f}/F)}\end{smallmatrix}}
\sum_{C_2\in\mathrm{Gal}(K_\mathfrak{f}/F)}
\overline{\chi}(C_1C_2)\log|g_\mathfrak{f}(C_0)^{C_1C_2}|\\
&=&\sum_{C_1}\sum_{C_2}\overline{\chi}(C_1)\overline{\chi}(C_2)\log|(g_\mathfrak{f}(C_0)^{C_2})^{C_1}|\\
&=&\sum_{C_1}\overline{\chi}(C_1)\log|g_\mathfrak{f}(C_0)^{C_1}|(\sum_{C_2}
\overline{\chi}(C_2))
\quad\textrm{by the fact}~g_\mathfrak{f}(C_0)\in F\\
&=&0\quad\textrm{by (\ref{char1})},
\end{eqnarray*}
which contradicts (\ref{contradiction}). Therefore, we conclude
$F=K_\mathfrak{f}$ as desired.
\end{proof}

\begin{remark}\label{degree}
\begin{itemize}
\item[(i)]
For a nontrivial integral ideal $\mathfrak{f}$ of an imaginary
quadratic field $K$, we have a degree formula
\begin{equation}\label{deg}
[K_{\mathfrak{f}}:K]=\frac{h_K\varphi(\mathfrak{f})\omega(\mathfrak{f})}{\omega_K},
\end{equation}
where $\varphi$ is the (multiplicative) Euler function for ideals, namely
\begin{equation*}
\varphi(\mathfrak{p}^n)=(\mathbf{N}_{K/\mathbb{Q}}(\mathfrak{p})-1)\mathbf{N}_{K/\mathbb{Q}}(\mathfrak{p})^{n-1}
\end{equation*}
for a prime ideal power $\mathfrak{p}^n$ ($n\geq1$),
$\omega(\mathfrak{f})$ is the number of roots of unity in $K$ which
are $\equiv1\Mod{\mathfrak{f}}$ and $\omega_K$ is the number of
roots of unity in $K$ \cite[Chapter VI, Theorem 1]{Lang2}.
\par
Let $N$ ($\geq2$) be an integer whose prime factorization is given
by
\begin{equation*} N=\prod_{a=1}^A p_a^{u_a}\prod_{b=1}^B
q_b^{v_b} \prod_{c=1}^C r_c^{w_c}\quad(A,B,C,u_a,v_b,w_c\geq0),
\end{equation*}
where each $p_a$ (respectively, $q_b$ and $r_c$) splits
(respectively, is inert and ramified) in $K$. One can then verify
that the condition
\begin{equation*}
4\sum_{a=1}^{A}\frac{1}{(p_a-1)p_a^{u_a-1}}
+2\sum_{b=1}^B\frac{1}{(q_b^2-1)q_b^{2(v_b-1)}}
+2\sum_{c=1}^C\frac{1}{(r_c-1)r_c^{2w_c-1}}<\frac{\omega((N))}{\omega_K}
\end{equation*}
implies the assumption (\ref{hypothesis}) when $\mathfrak{f}=(N)$.
\item[(ii)]
Let $d_k$ ($k=1,\ldots,n$) be the exponent of the group
$(\mathcal{O}_K/\mathfrak{p}_k^{e_k})^\times$.
Schertz \cite[Theorem 3]{Schertz} proved that
if the conductor of the extension $K_\mathfrak{f}/K$ is exactly $\mathfrak{f}$, then
$g_\mathfrak{f}(C_0)$ is a primitive
generator of $K_\mathfrak{f}$ over $K$ in either case when $n=1$ or
\begin{equation}\label{Schertzcondition}
d_k\nmid2~(k=1,\ldots,n-1),~d_n\nmid 2\omega_K~\textrm{and}~
\mathfrak{p}_n^{e_n}\nmid\gcd(6,\omega_K).
\end{equation}
Note that we do not require any condition on the conductor of the extension $K_\mathfrak{f}/K$.
\end{itemize}
\end{remark}

\section{Siegel-Ramachandra invariants of conductor $2$}\label{sec4}

Throughout this section we let $K$ be an imaginary quadratic field.
We shall examine certain properties of the singular value
$g_{\left[\begin{smallmatrix}0\\1/2\end{smallmatrix}\right]}(\tau_K)$ which is a $24$-th root of
$g_{(2)}(C_0)$. Although most of the results here are classical and
known, we will present relatively short and new proofs purely in
terms of Siegel functions.
\par
By the definition (\ref{Siegel}) we have
\begin{equation}\label{Siegel2}
\begin{array}{lll}
g_{\left[\begin{smallmatrix}0\\1/2\end{smallmatrix}\right]}(\tau)&=&
2\zeta_4q^{1/12}\displaystyle\prod_{n=1}^\infty(1+q^n)^{2},\\
 g_{\left[\begin{smallmatrix}1/2\\0\end{smallmatrix}\right]}(\tau)&=&
-q^{-1/24}\displaystyle\prod_{n=1}^\infty(1-q^{n-1/2})^{2},\\
 g_{\left[\begin{smallmatrix}1/2\\1/2\end{smallmatrix}\right]}(\tau)&=&
\zeta_8^3q^{-1/24}\displaystyle\prod_{n=1}^\infty(1+q^{n-1/2})^{2}.
\end{array}
\end{equation}
Let $\gamma_2(\tau)$ be the cube root of $j(\tau)$ whose Fourier
expansion begins with the term $q^{-1/3}$.

\begin{lemma}\label{jtog}
\begin{itemize}
\item[\textup{(i)}] We have the identity
\begin{equation*}
g_{\left[\begin{smallmatrix}0\\1/2\end{smallmatrix}\right]}(\tau)
g_{\left[\begin{smallmatrix}1/2\\0\end{smallmatrix}\right]}(\tau) g_{\left[\begin{smallmatrix}1/2\\1/2\end{smallmatrix}\right]}(\tau)=2\zeta_8.
\end{equation*}
\item[\textup{(ii)}]  We have the relations
\begin{equation*}
\gamma_2(\tau)=
\frac{g_{\left[\begin{smallmatrix}0\\1/2\end{smallmatrix}\right]}(\tau)^{12}+16}
{g_{\left[\begin{smallmatrix}0\\1/2\end{smallmatrix}\right]}(\tau)^4}=
\frac{g_{\left[\begin{smallmatrix}1/2\\0\end{smallmatrix}\right]}(\tau)^{12}+16}
{g_{\left[\begin{smallmatrix}1/2\\0\end{smallmatrix}\right]}(\tau)^4}=
\frac{g_{\left[\begin{smallmatrix}1/2\\1/2\end{smallmatrix}\right]}(\tau)^{12}+16} {g_{\left[\begin{smallmatrix}1/2\\1/2\end{smallmatrix}\right]}(\tau)^4}.
\end{equation*}
\end{itemize}
\end{lemma}
\begin{proof}
(i) We obtain by (\ref{Siegel2})
\begin{equation*}
g_{\left[\begin{smallmatrix}0\\1/2\end{smallmatrix}\right]}(\tau)
g_{\left[\begin{smallmatrix}1/2\\0\end{smallmatrix}\right]}(\tau)
g_{\left[\begin{smallmatrix}1/2\\1/2\end{smallmatrix}\right]}(\tau)=2\zeta_8\prod_{n=1}^\infty
(1+q^n)^2(1-q^{2n-1})^2=2\zeta_8\prod_{n=1}^\infty\frac{(1-q^{2n})^2}{(1-q^n)^2}\cdot
\frac{(1-q^n)^2}{(1-q^{2n})^2}=2\zeta_8.
\end{equation*}
(ii) Since $g_{\left[\begin{smallmatrix}0\\1/2\end{smallmatrix}\right]}(\tau)^{12}\in\mathcal{F}_2$ by Proposition
\ref{level} and
\begin{equation*}
\mathrm{Gal}(\mathcal{F}_2/\mathcal{F}_1)\simeq
\mathrm{GL}_2(\mathbb{Z}/2\mathbb{Z})/\{\pm I_2\} =\bigg\{
\left[\begin{matrix}1&0\\0&1
\end{matrix}\right],
\left[\begin{matrix}1&1\\0&1
\end{matrix}\right],
\left[\begin{matrix}1&1\\1&0
\end{matrix}\right],
\left[\begin{matrix}1&0\\1&1
\end{matrix}\right],
\left[\begin{matrix}0&1\\1&0
\end{matrix}\right],
\left[\begin{matrix}0&1\\1&1
\end{matrix}\right]\bigg\}
\end{equation*}
by (\ref{Gal(F_N/F_1)}), we derive that
\begin{eqnarray*}
&&\prod_{\sigma\in\mathrm{Gal}(\mathcal{F}_2/\mathcal{F}_1)}
(X-(g_{\left[\begin{smallmatrix}0\\1/2\end{smallmatrix}\right]}(\tau)^{12})^\sigma)\\
&=&(X-g_{\left[\begin{smallmatrix}0\\1/2\end{smallmatrix}\right]}(\tau)^{12})^2 (X-g_{\left[\begin{smallmatrix}1/2\\0\end{smallmatrix}\right]}(\tau)^{12})^2
(X-g_{\left[\begin{smallmatrix}1/2\\1/2\end{smallmatrix}\right]}(\tau)^{12})^2\quad\textrm{by Proposition \ref{transform} (iv) and (v)}\\
&=&(X^3+48X^2+(-q^{-1}+24-196884q+\cdots)X+4096)^2\quad\textrm{by (\ref{Siegel2})}\\
&=&(X^3+48X^2+(-j(\tau)+768)X+4096)^2\quad\textrm{by (\ref{jFourier})}\\
&=&((X+16)^3-j(\tau)X)^2\\
&=&((X+16)^3-\gamma_2(\tau)^3X)^2.
\end{eqnarray*}
Hence we get
\begin{equation*}
\gamma_2(\tau)=\xi_1\frac{g_{\left[\begin{smallmatrix}0\\1/2\end{smallmatrix}\right]}(\tau)^{12}+16}
{g_{\left[\begin{smallmatrix}0\\1/2\end{smallmatrix}\right]}(\tau)^{4}}
=\xi_2\frac{g_{\left[\begin{smallmatrix}1/2\\0\end{smallmatrix}\right]}(\tau)^{12}+16}
{g_{\left[\begin{smallmatrix}1/2\\0\end{smallmatrix}\right]}(\tau)^{4}}
=\xi_3\frac{g_{\left[\begin{smallmatrix}1/2\\1/2\end{smallmatrix}\right]}(\tau)^{12}+16} {g_{\left[\begin{smallmatrix}1/2\\1/2\end{smallmatrix}\right]}(\tau)^{4}}
\end{equation*}
for some cube roots of unity $\xi_k$ ($k=1,2,3$). Comparing the
leading terms of Fourier expansions we conclude
$\xi_1=\xi_2=\xi_3=1$.
\end{proof}
\begin{remark}
Let
\begin{equation*}
\eta(\tau)=\sqrt{2\pi}\zeta_8q^{1/24}\prod_{n=1}^\infty(1-q^n)
\end{equation*}
be the Dedekind eta function, and
\begin{equation*}
\mathfrak{f}(\tau)=\zeta_{48}^{-1}\frac{\eta((\tau+1)/2)}{\eta(\tau)},\quad
\mathfrak{f}_1(\tau)=\frac{\eta(\tau/2)}{\eta(\tau)},\quad
\mathfrak{f}_2(\tau)=\sqrt{2}\frac{\eta(2\tau)}{\eta(\tau)}
\end{equation*}
be the Weber functions. Then one can deduce the following identities
\begin{equation}\label{ftog}
\mathfrak{f}(\tau)^2=\zeta_8^5g_{\left[\begin{smallmatrix}1/2\\1/2\end{smallmatrix}\right]}(\tau),\quad
\mathfrak{f}_1(\tau)^2=-g_{\left[\begin{smallmatrix}1/2\\0\end{smallmatrix}\right]}(\tau),\quad
\mathfrak{f}_2(\tau)^2=\zeta_4^3g_{\left[\begin{smallmatrix}0\\1/2\end{smallmatrix}\right]}(\tau),
\end{equation}
and hence Lemma \ref{jtog} (ii) can be reformulated in terms of the
Weber functions as in the classical case \cite[Theorem
12.17]{Cox}.
\end{remark}

\begin{lemma}\label{intcoeff}
If $x$ is a real algebraic integer, then $\min(x,K)$ has integer
coefficients.
\end{lemma}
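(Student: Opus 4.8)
The plan is to compare $\min(x,K)$ with $\min(x,\mathbb{Q})$ and to exploit that $K$, being imaginary quadratic, is stable under complex conjugation while $x$, being real, is fixed by it.

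First I would set $f(X)=\min(x,K)\in K[X]$, a monic polynomial, and let $\sigma$ denote complex conjugation. Since $K$ is imaginary quadratic, $\sigma$ restricts to the nontrivial element of $\mathrm{Gal}(K/\mathbb{Q})$, and since $x\in\mathbb{R}$ we have $\sigma(x)=x$. Applying $\sigma$ coefficientwise to $f$ produces a monic polynomial $f^\sigma\in K[X]$ of the same degree with $f^\sigma(x)=\sigma(f(x))=0$. By minimality and monicity of $f$ this forces $f^\sigma=f$, so every coefficient of $f$ is fixed by $\sigma$ and hence lies in $\mathbb{Q}$. Thus $f\in\mathbb{Q}[X]$ is a monic polynomial over $\mathbb{Q}$ vanishing at $x$.

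Next, writing $m(X)=\min(x,\mathbb{Q})$, minimality of $m$ gives $\deg f\geq\deg m$, while $K\supseteq\mathbb{Q}$ gives $\deg f\leq\deg m$; since $m\mid f$ in $\mathbb{Q}[X]$ and both are monic of equal degree, $f=m$. Finally, because $x$ is an algebraic integer, $m(X)$ has coefficients in $\mathbb{Z}$ (the standard consequence of Gauss's lemma that the minimal polynomial over $\mathbb{Q}$ of an algebraic integer is monic with integer coefficients). Therefore $\min(x,K)=f=m\in\mathbb{Z}[X]$, as claimed.

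There is no serious obstacle here; the only points needing care are that complex conjugation genuinely acts as a field automorphism of $K$ over $\mathbb{Q}$ — valid precisely because $K\not\subseteq\mathbb{R}$ — and that it carries $\min(x,K)$ to a polynomial still annihilating $x$, which hinges on $x$ being real. Alternatively one could phrase the degree comparison via the linear disjointness of $K$ and $\mathbb{Q}(x)$ over $\mathbb{Q}$: since $\mathbb{Q}(x)\subseteq\mathbb{R}$ and $[K:\mathbb{Q}]=2$ is prime, $K\cap\mathbb{Q}(x)=\mathbb{Q}$, whence $[K(x):K]=[\mathbb{Q}(x):\mathbb{Q}]$ and again $\min(x,K)=\min(x,\mathbb{Q})\in\mathbb{Z}[X]$.
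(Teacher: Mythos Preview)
Your proof is correct. Your main argument via complex conjugation is a minor variation on the paper's: the paper goes directly through the tower law, computing
\[
[K(x):K]=\frac{[K(x):\mathbb{Q}(x)]\cdot[\mathbb{Q}(x):\mathbb{Q}]}{[K:\mathbb{Q}]}=[\mathbb{Q}(x):\mathbb{Q}]
\]
(using $x\in\mathbb{R}$ to get $[K(x):\mathbb{Q}(x)]=2$), and then concludes $\min(x,K)=\min(x,\mathbb{Q})\in\mathbb{Z}[X]$. This is precisely the linear-disjointness alternative you sketch at the end. Your primary route instead first forces $\min(x,K)\in\mathbb{Q}[X]$ by showing complex conjugation fixes its coefficients, and only then compares degrees; it is slightly more explicit about why the real hypothesis matters but otherwise equivalent in content and length.
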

\begin{proof}
Since $x\in\mathbb{R}$, we get
\begin{equation*}
[K(x):K]=\frac{[K(x):\mathbb{Q}(x)]\cdot[\mathbb{Q}(x):\mathbb{Q}]}{[K:\mathbb{Q}]}
=[\mathbb{Q}(x):\mathbb{Q}],
\end{equation*}
from which it follows that $\min(x,K)=\min(x,\mathbb{Q})$.
Furthermore, $\min(x,K)$ has integer coefficients, because $x$ is an
algebraic integer.
\end{proof}

\begin{proposition}\label{j2}
Let $K$ be an imaginary quadratic field of discriminant $d_K$.
\begin{itemize}
\item[\textup{(i)}] $j(\tau_K)$ is a real algebraic integer which generates
$H_K$ over $K$.
\item[\textup{(ii)}] If $p$ is a prime dividing the discriminant of
$\min(j(\tau_K),K)$, then $(\frac{d_K}{p})\neq1$ and
$p\leq|d_K|$.
\end{itemize}
\end{proposition}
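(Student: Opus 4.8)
The plan for part (i) is immediate. First I would obtain $H_K=K(j(\theta_K))$ as the case $N=1$ of Proposition \ref{CM}, using $\mathcal F_1=\mathbb Q(j(\tau))$ and the holomorphy of $j$ at $\theta_K$. That $j(\theta_K)$ is an algebraic integer is classical (cf. \cite{Cox}); alternatively I would read it off from Lemma \ref{jtog}: setting $v=g_{(1/2,0)}(\theta_K)^{12}$, Lemma \ref{jtog}(i) together with Proposition \ref{transform}(vi) shows that $v$ and $2^{12}/v$ are algebraic integers, so by Lemma \ref{jtog}(ii) $j(\theta_K)=v^{2}+48v+768+2^{12}/v$ is one. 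For realness, I would note that the coefficients in (\ref{jFourier}) are rational integers, hence $\overline{j(\theta_K)}=j(-\overline{\theta_K})$, and that $-\overline{\theta_K}\equiv\theta_K\pmod{\mathbb Z}$ in both cases of (\ref{theta}), so $j(-\overline{\theta_K})=j(\theta_K)$.

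For part (ii), the first step is to make the discriminant explicit. By (i) and Lemma \ref{intcoeff}, $\min(j(\theta_K),K)=\min(j(\theta_K),\mathbb Q)$ has rational integer coefficients; I will write $H(X)$ for it. By complex multiplication the roots of $H$ are the singular moduli $j(\theta_Q)$, $Q\in\mathrm C(d_K)$, and they are pairwise distinct, since distinct reduced forms give $\mathrm{SL}_2(\mathbb Z)$-inequivalent points and $j$ is injective on $\mathrm{SL}_2(\mathbb Z)\backslash\mathbb H$ by Lemma \ref{j}(i). Hence $\mathrm{disc}(H)=\pm\prod_{Q\neq Q'}(j(\theta_Q)-j(\theta_{Q'}))\in\mathbb Z\setminus\{0\}$, and a prime $p$ divides $\mathrm{disc}(H)$ exactly when $j(\theta_Q)\equiv j(\theta_{Q'})\pmod{\mathfrak P}$ for some $Q\neq Q'$ and some prime $\mathfrak P$ of $H_K$ over $p$; so the whole statement reduces to analysing such a congruence between distinct singular moduli.

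Here the core is a case analysis. If $\mathfrak P$ ramifies over $\mathbb Q$, then as $H_K/K$ is unramified $p$ ramifies in $K$, so $p\mid d_K$; hence $(d_K/p)=0\neq 1$ and $p\le|d_K|$. If $\mathfrak P$ is unramified, I would take elliptic curves $E_Q,E_{Q'}$ over a number field inside $H_K$ with $j$-invariants $j(\theta_Q),j(\theta_{Q'})$, CM by $\mathcal O_K$, and good reduction at $\mathfrak P$; the congruence means $\overline{E_Q}\cong\overline{E_{Q'}}$. If $p$ splits in $K$ these reductions are ordinary, and since an ordinary CM curve is the canonical lift of its reduction this forces $j(\theta_Q)=j(\theta_{Q'})$, a contradiction; so $p$ is inert and $(d_K/p)=-1\neq1$. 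Then $\overline{E_Q}\cong\overline{E_{Q'}}$ is supersingular, $\mathrm{End}(\overline{E_Q})$ is a maximal order $\mathcal R$ in the quaternion algebra ramified only at $p$ and $\infty$, and reduction of the two CM actions gives two embeddings of $\mathcal O_K$ into $\mathcal R$; a connecting isogeny $E_Q\to E_{Q'}$ of degree $a\le\sqrt{|d_K|/3}$ (from an integral ideal of that norm in the class $[\mathfrak a_Q][\mathfrak a_{Q'}]^{-1}$, cf. \cite[p.29]{Cox}) reduces to an element of $\mathcal R$ of reduced norm $a$ conjugating one embedding into the other, and comparing the reduced discriminant of the order generated by the two copies of a trace-zero generator of $\mathcal O_K$ — which is divisible by $\mathrm{disc}(\mathcal R)=p$ — against $d_K$ should give $p\le|d_K|$.

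I expect that last estimate to be the main obstacle: the crude inequality from $p\mid\mathrm{disc}(\mathcal R)$ alone only yields $p=O(d_K^{2})$, and extracting the sharp linear bound $p\le|d_K|$ requires genuinely using the minimality of the connecting isogeny and the optimality of the embeddings — or, equivalently, replacing the quaternion bookkeeping by an explicit Gross--Zagier-type factorization of $\prod_{Q\ne Q'}(j(\theta_Q)-j(\theta_{Q'}))$. A minor secondary point is $p=2$, where it is cleaner to argue directly from Lemma \ref{jtog}: since $g_{(0,1/2)}(\tau)g_{(1/2,0)}(\tau)g_{(1/2,1/2)}(\tau)=2\zeta_8$, the conjugates $w$ of $g_{(1/2,0)}(\theta_K)^{12}$ all divide $2^{12}$, and
\[
j(\theta_Q)-j(\theta_{Q'})=\frac{(w_Q-w_{Q'})\bigl(w_Qw_{Q'}(w_Q+w_{Q'}+48)-2^{12}\bigr)}{w_Qw_{Q'}},
\]
which isolates the $2$-part of $\mathrm{disc}(H)$; the small fields $d_K=-3,-4$ are in any case vacuous here, since then $h_K=1$ and $\mathrm{disc}(H)=1$.
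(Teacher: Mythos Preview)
The paper does not actually prove this proposition: it simply cites \cite{Lang} for (i) and \cite{G-Z}, \cite{Dorman}, \cite[Theorem 13.28]{Cox} for (ii). So you are attempting considerably more than the paper does, which is fine, but it changes what ``comparing'' means here.

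For (i), your arguments for $H_K=K(j(\theta_K))$ via Proposition~\ref{CM} with $N=1$ and for realness via $-\overline{\theta_K}\equiv\theta_K\pmod{\mathbb Z}$ are correct. However, your \emph{alternative} route to integrality through Lemma~\ref{jtog} is circular. Proposition~\ref{transform}(vi) only asserts that $g_{(r_1,r_2)}(\tau)$ is integral over $\mathbb Z[j(\tau)]$; evaluated at $\theta_K$ this says $v$ is integral over $\mathbb Z[j(\theta_K)]$, not over $\mathbb Z$. You therefore cannot conclude that $v$ (or $2^{12}/v$) is an algebraic integer without already knowing that $j(\theta_K)$ is one --- which is precisely the statement you are trying to prove. (This is exactly why Remark~\ref{algint} in the paper invokes Proposition~\ref{j2}(i) \emph{together with} Proposition~\ref{transform}(vi).) So for integrality you must fall back on the classical argument you cite from \cite{Cox}, just as the paper does.

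For (ii), your sketch is the standard Deuring--Gross--Zagier picture, and the part giving $(d_K/p)\neq1$ (ramified $\Rightarrow p\mid d_K$; split $\Rightarrow$ ordinary reduction and uniqueness of CM lifts; inert $\Rightarrow$ supersingular) is sound in outline. But you yourself flag the real issue: the quaternion bookkeeping you describe only yields a quadratic bound, and extracting the sharp $p\le|d_K|$ genuinely requires the explicit Gross--Zagier factorization of the differences of singular moduli (or Dorman's index computation). That is not a detail one fills in; it \emph{is} the content of \cite{G-Z} and \cite{Dorman}. So your plan for (ii), once made honest, collapses to the same citation the paper gives.
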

\begin{proof}
(i) See \cite[Chapter 5, Theorem 4 and Chapter 10, Theorem 1]{Lang}.\\
(ii) See \cite{G-Z}, \cite{Dorman} or \cite[Theorem 13.28]{Cox}.
\end{proof}

\begin{remark}\label{algint}
For any $\left[\begin{matrix}r_1\\r_2\end{matrix}\right]\in\mathbb{Q}^2\setminus\mathbb{Z}^2$,
$g_{\left[\begin{smallmatrix}r_1\\r_2\end{smallmatrix}\right]}(\tau_K)$ is an algebraic integer by Propositions
\ref{transform} (vi) and \ref{j2} (i).
\end{remark}

\begin{theorem}\label{properties}
Let $K\neq\mathbb{Q}(\sqrt{-3}),\mathbb{Q}(\sqrt{-1})$ and set
$x=\mathbf{N}_{K_{(2)}/H_K}(g_{\left[\begin{smallmatrix}0\\1/2\end{smallmatrix}\right]}(\tau_K)^{12})$. Assume
that $2$ is not inert in $K$ \textup{(}equivalently,
$d_K\equiv0\Mod{4}$ or $d_K\equiv1\Mod{8}$\textup{)}.
\begin{itemize}
\item[\textup{(i)}]
$x$ generates $H_K$ over $K$.
\item[\textup{(ii)}]
$x$ is a real algebraic integer dividing $2^{12}$ whose minimal
polynomial $\min(x,K)$ has integer coefficients.
\item[\textup{(iii)}] If $p$ is an odd prime dividing the discriminant of
 $\min(x,K)$, then $(\frac{d_K}{p})\neq1$ and $p\leq|d_K|$.
\end{itemize}
\end{theorem}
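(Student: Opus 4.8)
The plan is to pin down $K_{(2)}$ over $H_K$ and the invariant $x$ explicitly, and then read all three assertions off the two identities in Lemma \ref{jtog} together with Proposition \ref{j2}.

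First I would determine $[K_{(2)}:H_K]$ and the relevant Galois action. By the degree formula in Remark \ref{degree}, with $\omega((2))=\omega_K=2$ since $K\neq\mathbb{Q}(\sqrt{-3}),\mathbb{Q}(\sqrt{-1})$, the hypothesis that $2$ be non-inert splits into: $d_K\equiv1\pmod 8$ (so $2$ splits and $K_{(2)}=H_K$), and $d_K\equiv0\pmod 4$ (so $2$ ramifies and $[K_{(2)}:H_K]=2$). In the split case $x=g_{(0,1/2)}(\theta_K)^{12}$. In the ramified case $B=0$ and $C=-d_K/4$ in $\min(\theta_K,\mathbb{Q})=X^2+BX+C$, and using Proposition \ref{Gee} (whose kernel $\mathrm{Ker}_{2,\theta_K}$ is then trivial) and Proposition \ref{transform}(v) I would check that the nontrivial conjugate of $g_{(0,1/2)}(\theta_K)^{12}$ over $H_K$ is $g_{(1/2,0)}(\theta_K)^{12}$ when $d_K\equiv4\pmod 8$ and $g_{(1/2,1/2)}(\theta_K)^{12}$ when $d_K\equiv0\pmod 8$ (Lemma \ref{function} ensuring these are distinct functions). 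So $x$ is $g_{(0,1/2)}(\theta_K)^{12}$, or $g_{(0,1/2)}(\theta_K)^{12}g_{(1/2,0)}(\theta_K)^{12}$, or $g_{(0,1/2)}(\theta_K)^{12}g_{(1/2,1/2)}(\theta_K)^{12}$. Invoking Lemma \ref{jtog}(i) (so that $g_{(0,1/2)}(\theta_K)^{12}g_{(1/2,0)}(\theta_K)^{12}g_{(1/2,1/2)}(\theta_K)^{12}=(2\zeta_8)^{12}=-2^{12}$), in every case there is $y\in\{g_{(0,1/2)}(\theta_K)^{12},g_{(1/2,0)}(\theta_K)^{12},g_{(1/2,1/2)}(\theta_K)^{12}\}$ with $x=y$ or $x=-2^{12}/y$, and by Lemma \ref{jtog}(ii) one has $j(\theta_K)=(y+16)^3/y$.

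Next I would settle (i) and most of (ii). Since $j(\theta_K)=(y+16)^3/y\in K(y)=K(x)$ and $j(\theta_K)$ generates $H_K$ over $K$ by Proposition \ref{j2}(i), while $x\in H_K$, we get $H_K=K(j(\theta_K))\subseteq K(x)\subseteq H_K$, which is (i). For (ii): $x$ is a product of values $g_{(r_1,r_2)}(\theta_K)^{12}$, each an algebraic integer by Remark \ref{algint}, so $x$ is an algebraic integer; Lemma \ref{jtog}(i) exhibits $2^{12}/x$ as (up to sign) another such product, whence $x\mid2^{12}$; and $x$ is real because the functions $g_{(r_1,r_2)}(\tau)^{12}$ occurring have $q$-expansions (in $q$ or in $q^{1/2}$) with rational coefficients by (\ref{Siegel2}), while $-\overline{\theta_K}$ is congruent to $\theta_K$ modulo their period (using that $\theta_K$ is purely imaginary when $d_K\equiv0\pmod 4$). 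Then Lemma \ref{intcoeff} gives that $\min(x,K)$ has integer coefficients.

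For (iii) I would compare discriminants. As $y$ is a nonzero real algebraic integer dividing $2^{12}$, its norm $\mathbf{N}_{H_K/K}(y)$ is a rational integer (fixed by complex conjugation) dividing $2^{12h_K}$, hence $\pm$ a power of $2$; since $x=y$ or $x=-2^{12}/y$, evaluating $\prod_{\sigma\neq\tau}(x^\sigma-x^\tau)$ shows that $\mathrm{disc}(\min(x,K))$ differs from $\mathrm{disc}(\min(y,K))$ only by a sign and a power of $2$, so the two have the same odd prime divisors. Using $j(\theta_K)^\sigma=(y^\sigma+16)^3/y^\sigma$ for $\sigma\in\mathrm{Gal}(H_K/K)$, one has for $\sigma\neq\tau$
\[
j(\theta_K)^\sigma-j(\theta_K)^\tau=\frac{(y^\sigma-y^\tau)\big(y^\sigma y^\tau(y^\sigma+y^\tau)+48\,y^\sigma y^\tau-2^{12}\big)}{y^\sigma y^\tau},
\]
and taking the product over all ordered pairs $\sigma\neq\tau$ gives
\[
\mathbf{N}_{H_K/K}(y)^{2(h_K-1)}\,\mathrm{disc}(\min(j(\theta_K),K))=\pm\,\mathrm{disc}(\min(y,K))\prod_{\sigma\neq\tau}\big(y^\sigma y^\tau(y^\sigma+y^\tau)+48\,y^\sigma y^\tau-2^{12}\big),
\]
where the right-hand product is a rational integer (it is $\mathrm{Gal}(H_K/K)$-stable, is an algebraic integer, and is fixed by complex conjugation as $y$ is real). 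Hence any odd prime dividing $\mathrm{disc}(\min(x,K))$ divides $\mathrm{disc}(\min(y,K))$, hence divides $\mathrm{disc}(\min(j(\theta_K),K))$, and Proposition \ref{j2}(ii) completes the proof. I expect the main obstacle to be the first step — correctly identifying $[K_{(2)}:H_K]$ and the precise Galois conjugate of $g_{(0,1/2)}(\theta_K)^{12}$ for each residue of $d_K$ modulo $8$ — and, in (iii), the care needed to confirm that the extraneous factors are rational integers and powers of $2$; once Lemma \ref{jtog} is available the remainder is essentially formal.
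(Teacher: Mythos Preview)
Your proposal is correct and follows essentially the same route as the paper: compute $x$ explicitly via Proposition~\ref{Gee} in each residue class of $d_K\pmod 8$, use Lemma~\ref{jtog} to express $j(\theta_K)$ rationally in $x$ (the paper writes this directly as $(256-x)^3/x^2$ or $(x+16)^3/x$, whereas you pass through $y=-2^{12}/x$ to get the uniform $(y+16)^3/y$), and then read off (i)--(iii) from Proposition~\ref{j2}. Your treatment of (iii) via the global discriminant identity is a slightly more explicit packaging of the paper's observation that no prime ideal above an odd prime can contain $x_1x_2$, but the substance is the same.
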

\begin{proof}
(i) Since $g_{\left[\begin{smallmatrix}0\\1/2\end{smallmatrix}\right]}(\tau)^{12}\in\mathcal{F}_2$,
$g_{\left[\begin{smallmatrix}0\\1/2\end{smallmatrix}\right]}(\tau_K)^{12}$ lies in $K_{(2)}$ by Proposition
\ref{CM}. Moreover, we have
\begin{equation*}
[K_{(2)}:H_K]=\left\{\begin{array}{ll} 2 &
\textrm{if}~d_K\equiv0\Mod{4}, \vspace{0.1cm}\\
1 & \textrm{if}~d_K\equiv1\Mod{8}
\end{array}\right.
\end{equation*}
by the degree formula (\ref{deg}), and
\begin{equation*}
\mathrm{Gal}(K_{(2)}/H_K)\simeq
W_{2,\tau_K}/\mathrm{Ker}_{2,\tau_K}=\left\{\begin{array}{ll}
\bigg\{\left[\begin{matrix}1&0\\0&1\end{matrix}\right],
\left[\begin{matrix}1&0\\1&1\end{matrix}\right]\bigg\}
& \textrm{if}~d_K\equiv0\Mod{8},
\vspace{0.1cm}\\
\bigg\{\left[\begin{matrix}1&0\\0&1\end{matrix}\right],
\left[\begin{matrix}0&1\\1&0\end{matrix}\right]\bigg\}
& \textrm{if}~d_K\equiv4\Mod{8},\vspace{0.1cm}\\
\bigg\{
\left[\begin{matrix}1&0\\0&1\end{matrix}\right]\bigg\} &
\textrm{if}~d_K\equiv1\Mod{8}
\end{array}\right.
\end{equation*} by Proposition \ref{Gee}.
So we obtain
\begin{equation}\label{norm}
x=\mathbf{N}_{K_{(2)}/H_K}(g_{\left[\begin{smallmatrix}0\\1/2\end{smallmatrix}\right]}(\tau_K)^{12})=\left\{
\begin{array}{ll}
g_{\left[\begin{smallmatrix}0\\1/2\end{smallmatrix}\right]}(\tau_K)^{12} g_{\left[\begin{smallmatrix}1/2\\1/2\end{smallmatrix}\right]}(\tau_K)^{12}
 &
\textrm{if}~d_K\equiv0\Mod{8},
\vspace{0.1cm}\\
g_{\left[\begin{smallmatrix}0\\1/2\end{smallmatrix}\right]}(\tau_K)^{12} g_{\left[\begin{smallmatrix}1/2\\0\end{smallmatrix}\right]}(\tau_K)^{12}
 &
\textrm{if}~d_K\equiv4\Mod{8},\vspace{0.1cm}\\
g_{\left[\begin{smallmatrix}0\\1/2\end{smallmatrix}\right]}(\tau_K)^{12} & \textrm{if}~d_K\equiv1\Mod{8}
\end{array}\right.
\end{equation}
by Propositions \ref{Gee} and \ref{transform} (iv), (v); and hence
\begin{equation}\label{singular}
j(\tau_K)=\left\{\begin{array}{ll} (256-x)^3/x^2 &
\textrm{if}~d_K\equiv0\Mod{4}, \vspace{0.1cm}\\
(x+16)^3/x & \textrm{if}~d_K\equiv1\Mod{8}
\end{array}\right.
\end{equation}
by Lemma \ref{jtog}. Therefore $x$ generates $H_K$ over $K$ by
Proposition \ref{j2} (i).\\
(ii) We see that $x\in\mathbb{R}$ by the definition (\ref{theta}),
(\ref{Siegel2}) and (\ref{norm}). Furthermore, since $x$ is an
algebraic integer by Remark \ref{algint}, $\min(x,K)$ has integer
coefficients by Lemma \ref{intcoeff}. And, $x$ divides $2^{12}$ by (\ref{norm}) and Lemma \ref{jtog} (i).\\
(iii) If $h_K=1$, there is nothing to prove. So we assume $h_K>1$.
If $\sigma_1$ and $\sigma_2$ are distinct elements of
$\mathrm{Gal}(H_K/K)$, then we derive from (\ref{singular}) that
\begin{eqnarray*}
&&j(\tau_K)^{\sigma_1}-j(\tau_K)^{\sigma_2}\\&=&
\left\{\begin{array}{ll}
(x_1-x_2)(-x_1^2x_2^2+196608x_1x_2-16777216x_1-16777216x_2)/x_1^2x_2^2
& \textrm{if}~d_K\equiv0\Mod{4},
\vspace{0.1cm}\\
(x_1-x_2)(x_1^2x_2+x_1x_2^2+48x_1x_2-4096)/x_1x_2 &
\textrm{if}~d_K\equiv1\Mod{8},
\end{array}\right.
\end{eqnarray*}
where $x_1=x^{\sigma_1}$ and $x_2=x^{\sigma_2}$. Observe from (ii)
that there is no prime ideal $\mathfrak{p}$ of $H_K$ which contains
$x_1x_2$ and lies above an odd prime. Therefore, if $p$ is an odd
prime dividing the discriminant of $\min(x,K)$, then $(\frac{d_K}{p})\neq1$
and $p\leq|d_K|$ by Proposition \ref{j2} (ii).
\end{proof}

\begin{remark}\label{inertconj}
Let $K\neq\mathbb{Q}(\sqrt{-3}),\mathbb{Q}(\sqrt{-1})$. Since
$g_{\left[\begin{smallmatrix}0\\1/2\end{smallmatrix}\right]}(\tau_K)^{24}$ generates $K_{(2)}$ over $K$ by Theorem
\ref{overK} and Remark \ref{degree}, so does
$g_{\left[\begin{smallmatrix}0\\1/2\end{smallmatrix}\right]}(\tau_K)^{12}$. If $2$ is inert in $K$, then
\begin{equation*}
\mathrm{Gal}(K_{(2)}/H_K)\simeq
W_{2,\tau_K}/\mathrm{Ker}_{2,\tau_K}= \bigg\{
\left[\begin{matrix}
1&0\\0&1
\end{matrix}\right],
\left[\begin{matrix}
1&1\\1&0
\end{matrix}\right],
\left[\begin{matrix}
0&1\\1&1
\end{matrix}\right]
\bigg\}
\end{equation*}
by Proposition \ref{Gee}. And, we derive from Proposition
\ref{transform} (iv), (v) and Lemma  \ref{jtog} (i) that
\begin{equation*}
\mathbf{N}_{K_{(2)}/H_K}(g_{\left[\begin{smallmatrix}0\\1/2\end{smallmatrix}\right]}(\tau_K)^{12})=
g_{\left[\begin{smallmatrix}0\\1/2\end{smallmatrix}\right]}(\tau_K)^{12}
g_{\left[\begin{smallmatrix}1/2\\0\end{smallmatrix}\right]}(\tau_K)^{12}
g_{\left[\begin{smallmatrix}1/2\\1/2\end{smallmatrix}\right]}(\tau_K)^{12}=
-2^{12}.
\end{equation*}
Therefore, in this case one cannot develop a theory like Theorem
\ref{properties} with
$\mathbf{N}_{K_{(2)}/H_K}(g_{\left[\begin{smallmatrix}0\\1/2\end{smallmatrix}\right]}(\tau_K)^{12})$.
\end{remark}

\begin{theorem}\label{generator}
Let $K$ be an imaginary quadratic field of discriminant $d_K$.
Assume that $2$ is inert and $3$ is not ramified in $K$
\textup{(}equivalently, $d_K\equiv5\Mod{8}$ and
$d_K\not\equiv0\Mod{3}$\textup{)}.
\begin{itemize}
\item[\textup{(i)}] The real algebraic integer
$\zeta_8g_{\left[\begin{smallmatrix}0\\1/2\end{smallmatrix}\right]}(\tau_K)$ generates $K_{(2)}$ over $H_K$.
\item[\textup{(ii)}] The real algebraic integer $\gamma_2(\tau_K)$ generates $H_K$ over $K$.
\end{itemize}
\end{theorem}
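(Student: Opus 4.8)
The plan is to prove (ii) first and deduce (i) from it.

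\medskip\noindent For (ii): recall that $\gamma_2\in\mathcal{F}_3$ (a classical fact; see \cite[Chapter 12]{Cox}), so Proposition \ref{CM} gives $\gamma_2(\theta_K)\in K_{(3)}$. By the degree formula (\ref{deg}), since $3$ is unramified in $K$ and $K\neq\mathbb{Q}(\sqrt{-1}),\mathbb{Q}(\sqrt{-3})$ (so $\omega_K=2$ and $\omega((3))=1$), one computes $[K_{(3)}:H_K]=\varphi((3))\omega((3))/\omega_K\in\{2,4\}$, which is prime to $3$. On the other hand $\gamma_2(\theta_K)^3=j(\theta_K)\in H_K$, so $[H_K(\gamma_2(\theta_K)):H_K]$ divides $3$; as it also divides $[K_{(3)}:H_K]$ it must be $1$, i.e.\ $\gamma_2(\theta_K)\in H_K$. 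Since $j(\theta_K)=\gamma_2(\theta_K)^3$ generates $H_K$ over $K$ by Proposition \ref{j2}(i), so does $\gamma_2(\theta_K)$. That $\gamma_2(\theta_K)$ is a real algebraic integer follows from $q(\theta_K)=e^{2\pi i\theta_K}=-e^{-\pi\sqrt{|d_K|}}<0$ being real — so the leading factor $q^{-1/3}$ of the $q$-expansion of $\gamma_2$ contributes a real value at $\theta_K$ — together with $\gamma_2$ being integral over $\mathbb{Z}[j]$ and $j(\theta_K)\in\mathcal{O}_{H_K}$.

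\medskip\noindent For (i): by the degree formula (\ref{deg}) and the fact that $2$ is inert, $[K_{(2)}:H_K]=\varphi((2))\omega((2))/\omega_K=3$. Set $w=\zeta_8 g_{(0,1/2)}(\theta_K)$. Tracking the roots of unity in (\ref{Siegel2}) with $q(\theta_K)<0$ gives $g_{(0,1/2)}(\theta_K)=2\zeta_8^3 r$ for some $r>0$, so $w=-2r\in\mathbb{R}$, and $w$ is an algebraic integer by Remark \ref{algint}. Since $g_{(0,1/2)}(\tau)^{12}\in\mathcal{F}_2$ (Proposition \ref{level}), $g_{(0,1/2)}(\theta_K)^{12}$ generates $K_{(2)}$ over $K$ (Remark \ref{inertconj}), and $w^{12}=-g_{(0,1/2)}(\theta_K)^{12}$, so $H_K(w^{12})=K_{(2)}$ and $w^{12}\notin H_K$; hence $w\notin H_K$ and $H_K(w)\supseteq K_{(2)}$. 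It therefore suffices to show $w\in K_{(2)}$. Evaluating Lemma \ref{jtog}(ii) at $\theta_K$ and substituting $g_{(0,1/2)}(\theta_K)=\zeta_8^{-1}w$ (whence $g_{(0,1/2)}(\theta_K)^4=-w^4$ and $g_{(0,1/2)}(\theta_K)^{12}=-w^{12}$), and using $\gamma_2(\theta_K)\in H_K$ from (ii), shows that $w^4$ is a root of
\begin{equation*}
S^{3}-\gamma_2(\theta_K)\,S-16\in H_K[S].
\end{equation*}
This cubic is irreducible over $H_K$: a proper factor would put $w^4$, hence $w^{12}=(w^4)^3$, in an extension of $H_K$ of degree $\le 2$, contradicting $[H_K(w^{12}):H_K]=3$. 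Therefore $[H_K(w^4):H_K]=3=[K_{(2)}:H_K]$, and since $w^{12}=(w^4)^3\in H_K(w^4)$ already generates $K_{(2)}$ over $H_K$, we get $H_K(w^4)=K_{(2)}$, i.e.\ $w^4\in K_{(2)}$.

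\medskip\noindent It remains to upgrade $w^4\in K_{(2)}$ to $w\in K_{(2)}$, and this is the hard part. Here I would use that $g_{(0,1/2)}(\tau)^3\in\mathcal{F}_8$ (Proposition \ref{level}), so $w^3=\zeta_8^3 g_{(0,1/2)}(\theta_K)^3\in K_{(8)}$ and hence $w=w^4/w^3\in K_{(8)}$; then apply Shimura's reciprocity law (Proposition \ref{Gee} at level $8$). The matrices $\alpha\in W_{8,\theta_K}$ with $\alpha\equiv 1_2\pmod 2$ represent $\mathrm{Gal}(K_{(8)}/K_{(2)})$; computing the action of such an $\alpha$ on $w^3=\zeta_8^3 g_{(0,1/2)}(\tau)^3|_{\tau=\theta_K}$ — via Proposition \ref{transform}(ii) for the $\mathrm{SL}_2$-part of $\alpha$, the Fourier-coefficient rule for its diagonal part, and the cyclotomic action on $\zeta_8$ — one checks that every such $\alpha$ fixes $w^3$, hence fixes $w$. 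Thus $w\in K_{(2)}$ and $H_K(w)=K_{(2)}$. The main obstacle is precisely this reciprocity computation: the factor $\zeta_8$ is what renders $w$ real but it is not of ``level $2$'', so one cannot dispense with a direct calculation, and it is there that the hypotheses $d_K\equiv 5\pmod 8$ and $3\nmid d_K$ are used to control the conjugates.
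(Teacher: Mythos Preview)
Your argument is correct and converges on the same decisive step as the paper---the level-$8$ Shimura reciprocity computation showing $\zeta_8^{-1}\alpha^3$ (equivalently your $w^3=-\zeta_8^{-1}\alpha^3$) lies in $K_{(2)}$---but the logical order and two of the sub-arguments differ. The paper proves (i) first: it obtains $\alpha^4\in K_{(2)}$ from $g_{(0,1/2)}(\tau)^4\in\mathcal{F}_6$ together with $[K_{(6)}:K_{(2)}]\in\{2,4\}$ being prime to~$3$, then carries out the explicit reciprocity calculation at level~$8$ for $\zeta_8^{-1}\alpha^3$, working with concrete generators of $\mathrm{Gal}(K_{(8)}/K_{(2)})\simeq\mathbb{Z}/2\mathbb{Z}\times\mathbb{Z}/4\mathbb{Z}$ (two cases according to $d_K\bmod16$). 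Part~(ii) is then deduced by showing, via $\zeta_3\notin K_{(2)}$, that the $H_K$-conjugates of $\alpha^4$ are exactly $g_{(1/2,0)}(\theta_K)^4$ and $g_{(1/2,1/2)}(\theta_K)^4$, so Lemma~\ref{jtog}(ii) forces $\gamma_2(\theta_K)$ into $H_K$. Your reversal---proving (ii) first via the classical fact $\gamma_2\in\mathcal{F}_3$, then reading $[H_K(w^4):H_K]=3$ off the cubic $S^3-\gamma_2(\theta_K)S-16\in H_K[S]$---is tidier and bypasses the $\mathcal{F}_6$ detour, at the cost of importing the level-$3$ modularity of $\gamma_2$ from outside the Siegel-function framework the paper is deliberately staying within. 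One small correction to your closing remark: the hypothesis $3\nmid d_K$ is \emph{not} used in the level-$8$ reciprocity step (only $d_K\equiv5\pmod8$ enters there, through the shape of $W_{8,\theta_K}$); in your argument it is consumed entirely in part~(ii).
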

\begin{proof}
(i) Let $\alpha=g_{\left[\begin{smallmatrix}0\\1/2\end{smallmatrix}\right]}(\tau_K)$. It is an algebraic integer
by Remark \ref{algint}, and $\zeta_8\alpha\in\mathbb{R}$ by the
definition (\ref{theta}) and (\ref{Siegel2}). Since $\alpha^4$ is a
real cube root of $\alpha^{12}$, we get from Remark \ref{inertconj}
that
\begin{equation*}
[K_{(2)}(\alpha^4):K_{(2)}]=[K(\alpha^4):K(\alpha^{12})]
=\frac{[K(\alpha^4):\mathbb{Q}(\alpha^{4})]
[\mathbb{Q}(\alpha^4):\mathbb{Q}(\alpha^{12})]}{[K(\alpha^{12}):\mathbb{Q}(\alpha^{12})]}
=[\mathbb{Q}(\alpha^4):\mathbb{Q}(\alpha^{12})]=1~\textrm{or}~3.
\end{equation*}
Furthermore, since $g_{\left[\begin{smallmatrix}0\\1/2\end{smallmatrix}\right]}(\tau)^4\in\mathcal{F}_6$ by
Proposition \ref{level}, we get $\alpha^4\in K_{(6)}$ by Proposition
\ref{CM}, from which it follows that $[K_{(2)}(\alpha^4):K_{(2)}]$
divides
\begin{equation*}
[K_{(6)}:K_{(2)}]=\left\{\begin{array}{ll}
2 & \textrm{if $3$ splits in $K$},\vspace{0.1cm}\\
4 & \textrm{if $3$ is inert in $K$}\end{array}\right.
\end{equation*}
by the degree formula (\ref{deg}). Hence
$[K_{(2)}(\alpha^4):K_{(2)}]=1$, which implies $\alpha^4\in
K_{(2)}$.
\par
On the other hand, since
$\zeta_8^{-1}g_{\left[\begin{smallmatrix}0\\1/2\end{smallmatrix}\right]}(\tau)^3\in\mathcal{F}_8$ by Proposition
\ref{level}, we obtain $\zeta_8^{-1}\alpha^3\in K_{(8)}$ by
Proposition \ref{CM}. One can then readily check by Proposition
\ref{Gee} that
\begin{equation*}
\mathrm{Gal}(K_{(8)}/K_{(2)})\simeq\bigg\langle
\left[\begin{matrix}5&4\\4&1\end{matrix}\right]\bigg\rangle\times
\left\{\begin{array}{ll} \bigg\langle
\left[\begin{matrix}7&6\\2&1\end{matrix}\right]
\bigg\rangle &
\textrm{if}~d_K\equiv5\Mod{16},\vspace{0.1cm}\\
\bigg\langle
\left[\begin{matrix}7&2\\2&1\end{matrix}\right]
\bigg\rangle & \textrm{if}~d_K\equiv13\Mod{16}
\end{array}\right.\quad
(\simeq\mathbb{Z}/2\mathbb{Z}\times \mathbb{Z}/4\mathbb{Z}).
\end{equation*}
Decomposing
$\left[\begin{matrix}5&4\\4&1\end{matrix}\right]=
\left[\begin{matrix}5&12\\12&29\end{matrix}\right]
\left[\begin{matrix}1&0\\0&5\end{matrix}\right]$, we
deduce that
\begin{eqnarray*}
(\zeta_8^{-1}\alpha^3)^{\left[\begin{smallmatrix}5&4\\4&1\end{smallmatrix}\right]}
&=&(\zeta_8^{-1}g_{\left[\begin{smallmatrix}0\\1/2\end{smallmatrix}\right]}(\tau)^3)^{
\left[\begin{smallmatrix}5&12\\12&29\end{smallmatrix}\right]
\left[\begin{smallmatrix}1&0\\0&5\end{smallmatrix}\right]}(\tau_K)
\quad\textrm{by Proposition \ref{Gee}}\\
&=& (\zeta_8^{-1}g_{\left[\begin{smallmatrix}0\\1/2\end{smallmatrix}\right]}(\tau)^3\circ
\left[\begin{smallmatrix}5&12\\12&29\end{smallmatrix}\right])^{\left[\begin{smallmatrix}1&0\\0&5\end{smallmatrix}\right]}(\tau_K)\\
&=&(\zeta_8^{-1}g_{\left[\begin{smallmatrix}6\\29/2\end{smallmatrix}\right]}(\tau)^3)^{\left[\begin{smallmatrix}1&0\\0&5\end{smallmatrix}\right]}(\tau_K)
\quad\textrm{by Proposition \ref{transform} (ii)}\\
&=&(\zeta_8^3g_{\left[\begin{smallmatrix}0\\1/2\end{smallmatrix}\right]}(\tau)^3)
^{\left[\begin{smallmatrix}1&0\\0&5\end{smallmatrix}\right]}(\tau_K)
\quad\textrm{by Proposition \ref{transform} (iii)}\\
&=&(8\zeta_8q^{1/4}\prod_{n=1}^\infty(1+q^n)^6)
^{\left[\begin{smallmatrix}1&0\\0&5\end{smallmatrix}\right]}(\tau_K)
\quad\textrm{by (\ref{Siegel2})}\\
&=& (8\zeta_8^5q^{1/4}\prod_{n=1}^\infty(1+q^n)^6)(\tau_K)\\
&=&\zeta_8^{-1}g_{\left[\begin{smallmatrix}0\\1/2\end{smallmatrix}\right]}(\tau_K)^3\\
&=&\zeta_8^{-1}\alpha^3.
\end{eqnarray*}
In a similar way, one can verify that $\zeta_8^{-1}\alpha^3$ is
invariant under the actions of
\begin{equation*}
\left[
\begin{matrix}7&6\\2&1\end{matrix}\right]=
\left[
\begin{matrix}7&2\\10&3\end{matrix}\right]
\left[\begin{matrix}1&0\\0&3\end{matrix}\right]~\textrm{and}~
\left[
\begin{matrix}7&2\\2&1\end{matrix}\right]=
\left[
\begin{matrix}23&14\\18&11\end{matrix}\right]
\left[\begin{matrix}1&0\\0&3\end{matrix}\right].
\end{equation*}
Thus,
$\zeta_8^{-1}\alpha^3$ lies in $K_{(2)}$, so does
$\alpha^4/\zeta_8^{-1}\alpha^3=\zeta_8\alpha$. Lastly, since
$\alpha^{12}$ generates $K_{(2)}$ over $K$, so does $\zeta_8\alpha$.
Therefore, we are done.
\\(ii) Let $\alpha=g_{\left[\begin{smallmatrix}0\\1/2\end{smallmatrix}\right]}(\tau_K)$.
Since $\gamma_2(\tau_K)=(\alpha^{12}+16)/\alpha^4$ by Lemma
\ref{jtog} (ii) and $\alpha^4\in K_{(2)}\cap\mathbb{R}$ by (i), we
have $\gamma_2(\tau_K)\in K_{(2)}\cap\mathbb{R}$. Note from Remark
\ref{inertconj} that $g_{\left[\begin{smallmatrix}1/2\\0\end{smallmatrix}\right]}(\tau_K)^{12}$ and
$g_{\left[\begin{smallmatrix}1/2\\1/2\end{smallmatrix}\right]}(\tau_K)^{12}$ are the two conjugates of
$\alpha^{12}$ over $H_K$. In particular, $g_{\left[\begin{smallmatrix}1/2\\0\end{smallmatrix}\right]}(\tau_K)^{4}$
and $g_{\left[\begin{smallmatrix}1/2\\1/2\end{smallmatrix}\right]}(\tau_K)^{4}$ belong to $K_{(2)}$ by Lemma
\ref{jtog} (ii). Hence, the other two conjugates of $\alpha^4$ over
$H_K$ are $\xi_1g_{\left[\begin{smallmatrix}1/2\\0\end{smallmatrix}\right]}(\tau_K)^4$ and
$\xi_2g_{\left[\begin{smallmatrix}1/2\\1/2\end{smallmatrix}\right]}(\tau_K)^4$ for some cube roots of unity
$\xi_1,\xi_2$. If $\zeta_3$ lies in $K_{(2)}$, then $3$ ramifies in
$K_{(2)}$ (but not in $K$ by hypothesis), which contradicts the fact
that all prime ideals of $K$ which are ramified in $K_{(2)}$ must
divide $(2)$. So we get $\xi_1=\xi_2=1$. And, Lemma \ref{jtog} (ii)
shows that $\gamma_2(\tau_K)$ is invariant under the action of
$\mathrm{Gal}(K_{(2)}/H_K)$; and hence $\gamma_2(\tau_K)\in H_K$.
Therefore, we conclude that $\gamma_2(\tau_K)$ is a real algebraic
integer which generates $H_K$ over $K$ by the fact
$j(\tau_K)=\gamma_2(\tau_K)^3$ and Proposition \ref{j2} (i).
\end{proof}

\begin{remark}
Observe that
$\zeta_8g_{\left[\begin{smallmatrix}0\\1/2\end{smallmatrix}\right]}(\tau_K)=\zeta_8^3\mathfrak{f}_2(\tau_K)^2$
by (\ref{ftog}). Besides Theorem \ref{generator} there are several
other theorems which assert that the singular values of the Weber
functions and $\gamma_2(\tau)$ generate class fields of imaginary
quadratic fields (\cite[$\S$126--127]{Weber}, \cite{Schertz2},
\cite{Schertz3}, \cite[$\S$12]{Cox}) whose proofs are quite
classical. However, they are certainly elegant and worthy of
considering. On the other hand, Gee \cite{Gee} applied Shimura's
reciprocity law to the Weber functions which satisfy the following
transformation properties:
\begin{equation*}
\begin{array}{lll}
\mathfrak{f}(\tau)\circ T=\zeta_{48}^{-1}\mathfrak{f}_1(\tau), &
\mathfrak{f}_1(\tau)\circ T=\zeta_{48}^{-1}\mathfrak{f}(\tau), &
\mathfrak{f}_2(\tau)\circ T=\zeta_{24}\mathfrak{f}_2(\tau),\vspace{0.1cm}\\
\mathfrak{f}(\tau)\circ S=\mathfrak{f}(\tau), &
\mathfrak{f}_1(\tau)\circ S=\mathfrak{f}_2(\tau), &
\mathfrak{f}_2(\tau)\circ S=\mathfrak{f}_1(\tau),
\end{array}
\end{equation*}
where $T=\left[\begin{matrix}1&1\\0&1\end{matrix}\right]$
and $S=\left[\begin{matrix} 0&-1\\1&0\end{matrix}\right]$
are the generators of $\mathrm{SL}_2(\mathbb{Z})$. But, the general
transformation formula for
\begin{equation*}
f(\tau)\circ\gamma\quad(f(\tau)=\mathfrak{f}(\tau),\mathfrak{f}_1(\tau),
\mathfrak{f}_2(\tau),~ \gamma\in\mathrm{SL}_2(\mathbb{Z}))
\end{equation*}
does not seem to be known, which forces her to produce a redundant
step to decompose $\gamma$ into a product of $T$ and $S$
\cite[$\S$5]{Gee}. Thus we would like to point out that the
relation (\ref{ftog}) and Proposition \ref{transform} (ii), (iii)
will give us an explicit formula for $f(\tau)^2\circ\gamma$, from
which one can efficiently apply Shimura's reciprocity law.
\end{remark}

\section{Application to class number one problem}\label{sec5}

In this section we shall revisit Gauss' class number one problem for
imaginary quadratic fields.
\par
Let $K$ be an imaginary quadratic field of discriminant $d_K$. Since
$j(\tau_K)$ is a real algebraic integer lying in $H_K$ by
Proposition \ref{j2} (i), it should be an integer when $K$ has class
number one. By determining the form class group $\mathrm{C}(d_K)$ we
know that there are only nine imaginary quadratic fields $K$ of
class number one with $d_K\geq-163$ \cite[p.261]{Cox}:

\begin{table}[h]
{\footnotesize
\begin{tabular} {c|ccccccccc}\hline $K$ &
$\mathbb{Q}(\sqrt{-3})$ & $\mathbb{Q}(\sqrt{-1})$ &
$\mathbb{Q}(\sqrt{-7})$ & $\mathbb{Q}(\sqrt{-2})$ &
$\mathbb{Q}(\sqrt{-11})$ & $\mathbb{Q}(\sqrt{-19})$ &
$\mathbb{Q}(\sqrt{-43})$ &
$\mathbb{Q}(\sqrt{-67})$ & $\mathbb{Q}(\sqrt{-163})$ \\
$d_K$ & $-3$ & $-4$ & $-7$ & $-8$ & $-11$ & $-19$ &
$-43$ & $-67$ & $-163$\\
$j(\tau_K)$ & $0$ & $12^3$ & $-15^3$ & $20^3$ & $-32^3$ &
$-96^3$ & $-960^3$ & $-5280^3$ & $-640320^3$\\
$\gamma_2(\tau_K)$ & $0$ & $12$ & $-15$ & $20$ & $-32$ &
$-96$ & $-960$ & $-5280$ & $-640320$\\
\hline
\end{tabular}
}
\caption{Imaginary quadratic fields $K$ of class number one with
$d_K\geq-163$.}\label{Table}
\end{table}

We shall show in this section that the above table is the complete
one by utilizing Shimura's reciprocity law and Siegel functions.

\begin{lemma}\label{inequiv}
Let $\tau_0\in\mathbb{H}$, and set $A=|e^{2\pi i\tau_0}|$.
\begin{itemize}
\item[\textup{(i)}] If $\left[\begin{matrix}a\\b\end{matrix}\right]\in\mathbb{Q}^2$ with $0<a\leq1/2$,
then $|g_{\left[\begin{smallmatrix}a\\b\end{smallmatrix}\right]}(\tau_0)|\leq
A^{(1/2)\mathbf{B}_2(a)}e^{2A^a/(1-A)}$.
\item[\textup{(ii)}] If $b\in\mathbb{Q}$ with $0<b<1$, then
$|g_{\left[\begin{smallmatrix}0\\b\end{smallmatrix}\right]}(\tau_0)|\leq A^{(1/2)\mathbf{B}_2(0)}|1-e^{2\pi
ib}|e^{2A/(1-A)}$.
\end{itemize}
\end{lemma}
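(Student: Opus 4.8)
The plan is to bound each factor in the product defining the Siegel function \eqref{Siegel} directly, using the elementary inequality $|1-w|\leq 1+|w|$ together with the fact that $|q|=A<1$ and $|q_z|=A^{a}$ when $z=a\tau_0+b$. First I would observe that the prefactor $-q^{(1/2)\mathbf{B}_2(a)}e^{\pi ib(a-1)}$ has absolute value exactly $A^{(1/2)\mathbf{B}_2(a)}$, since $a$, $b$ are real. Then for part (i), where $0<a\leq 1/2$, the factor $|1-q_z|\leq 1+A^{a}\leq 2$ (using $A<1$ so $A^a<1$), and for $n\geq 1$ one has $|1-q^nq_z|\leq 1+A^{n+a}$ and $|1-q^nq_z^{-1}|\leq 1+A^{n-a}$. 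Hence
\begin{equation*}
|g_{(a,b)}(\tau_0)|\leq A^{(1/2)\mathbf{B}_2(a)}\cdot 2\cdot\prod_{n=1}^\infty(1+A^{n+a})(1+A^{n-a}).
\end{equation*}
Using $1+x\leq e^{x}$ for $x\geq 0$, the product is bounded by $\exp\big(\sum_{n\geq1}(A^{n+a}+A^{n-a})\big)$, and since $0<a\leq 1/2$ gives $A^{n+a}\leq A^{n-a}$, this is at most $\exp\big(2\sum_{n\geq 1}A^{n-a}\big)=\exp\big(2A^{-a}\sum_{n\geq1}A^n\big)=\exp\big(2A^{1-a}/(1-A)\big)$. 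That overshoots the claimed $e^{2A^a/(1-A)}$ slightly, so the more careful route is to keep $|1-q_z|\leq 1+A^a$ and absorb it: the full product $(1+A^a)\prod_{n\geq1}(1+A^{n+a})(1+A^{n-a})\leq\exp\big(A^a+\sum_{n\geq1}(A^{n+a}+A^{n-a})\big)$, and regrouping the exponent as $A^a(1+\sum_{n\geq1}A^n)+A^{-a}\sum_{n\geq1}A^n \le A^a\sum_{n\geq0}A^n + A^a\sum_{n\ge 1}A^n\le 2A^a/(1-A)$, where the middle step again uses $A^{-a}\le A^a$-type monotonicity only after noting $A^{n-a}=A^a\cdot A^{n-2a}\le A^a$ for $n\ge1$ since $n-2a\ge 0$. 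This gives exactly the stated bound. Part (ii) is the degenerate case $a=0$: then $q_z=e^{2\pi ib}$ has modulus $1$, so $|1-q_z|=|1-e^{2\pi ib}|$ is pulled out as a separate factor, while $|1-q^nq_z|\leq 1+A^n$ and $|1-q^nq_z^{-1}|\leq 1+A^n$ for $n\geq1$, and $\prod_{n\geq1}(1+A^n)^2\leq\exp\big(2\sum_{n\geq1}A^n\big)=\exp\big(2A/(1-A)\big)$; combined with the prefactor modulus $A^{(1/2)\mathbf{B}_2(0)}$ this yields (ii).

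The only delicate point is bookkeeping in the exponent to land on the precise constant $2A^a/(1-A)$ rather than a cruder bound like $2A^{1-a}/(1-A)$ or $4/(1-A)$; the inequalities $A<1$, $0<a\le 1/2$, and $n-2a\ge 0$ for $n\ge1$ are exactly what make the regrouping work, so I would be careful to invoke them explicitly. Everything else is routine: termwise triangle inequality, $1+x\le e^x$, and summing a geometric series. I expect no serious obstacle — the main task is just to present the estimate cleanly so that the constant matches what is needed downstream (presumably in comparing singular values of Siegel functions at the nine discriminants in Table \ref{Table}).
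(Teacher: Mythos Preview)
Your approach is the same as the paper's---triangle inequality, then $1+x\le e^x$, then a geometric series---and part (ii) is fine. But in part (i) the key inequality you wrote is false. You assert
\[
A^{-a}\sum_{n\ge 1}A^n \;\le\; A^a\sum_{n\ge 1}A^n,
\]
which would require $A^{-a}\le A^a$; since $0<A<1$ and $a>0$ the reverse holds. Your stated justification ``$A^{n-a}=A^a\cdot A^{n-2a}\le A^a$ for $n\ge 1$'' only bounds each term by the constant $A^a$, which is useless for controlling the infinite sum.

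The fix is the index shift that the paper uses (before rather than after exponentiating, but the content is identical): from $a\le 1/2$ one gets $n-a\ge (n-1)+a$ for every $n\ge 1$, hence $A^{n-a}\le A^{(n-1)+a}$ and
\[
\sum_{n\ge 1}A^{n-a}\;\le\;\sum_{n\ge 1}A^{(n-1)+a}\;=\;A^a\sum_{m\ge 0}A^m\;=\;\frac{A^a}{1-A}.
\]
Together with the first piece $A^a\sum_{n\ge 0}A^n=A^a/(1-A)$ this gives exactly $2A^a/(1-A)$. Equivalently, as the paper writes it, $(1+A^a)\prod_{n\ge 1}(1+A^{n+a})(1+A^{n-a})\le \prod_{n\ge 0}(1+A^{n+a})^2\le \exp\!\big(2\sum_{n\ge 0}A^{n+a}\big)$. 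So the idea you flagged as ``the only delicate point'' really is the delicate point, and the inequality you need there is $A^{n-a}\le A^{(n-1)+a}$, not the (false) $A^{-a}\le A^a$.
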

\begin{proof}
(i) We derive from the definition (\ref{Siegel}) that
\begin{eqnarray*}
|g_{\left[\begin{smallmatrix}a\\b\end{smallmatrix}\right]}(\tau_0)|&\leq& A^{(1/2)\mathbf{B}_2(a)}(1+A^a)
\prod_{n=1}^\infty(1+A^{n+a})(1+A^{n-a})\\
&\leq&A^{(1/2)\mathbf{B}_2(a)}\prod_{n=0}^\infty(1+A^{n+a})^2\quad\textrm{by
the facts
$A<1$ and $0<a\leq1/2$}\\
&\leq&A^{(1/2)\mathbf{B}_2(a)}\prod_{n=0}^\infty e^{2A^{n+a}}\quad
\textrm{by the inequality $1+X<e^X$ for $X>0$}\\
&=&A^{(1/2)\mathbf{B}_2(a)}e^{2A^a/(1-A)}.
\end{eqnarray*}
(ii) In a similar way, we get that
\begin{eqnarray*}
|g_{\left[\begin{smallmatrix}0\\b\end{smallmatrix}\right]}(\tau_0)|&\leq&A^{(1/2)\mathbf{B}_2(0)}|1-e^{2\pi
ib}|\prod_{n=1}^\infty(1+A^n)^2\\
&\leq&A^{(1/2)\mathbf{B}_2(0)}|1-e^{2\pi ib}|\prod_{n=1}^\infty
e^{2A^n}\quad
\textrm{by the inequality $1+X<e^X$ for $X>0$}\\
&=&A^{(1/2)\mathbf{B}_2(0)}|1-e^{2\pi ib}|e^{2A/(1-A)}.
\end{eqnarray*}
\end{proof}

\begin{theorem}\label{classnumberone}
Let $K$
\textup{(}$\neq\mathbb{Q}(\sqrt{-3}),\mathbb{Q}(\sqrt{-1})$\textup{)}
be an imaginary quadratic field of discriminant $d_K$. Assume that
$K$ has class number one \textup{(}that is, $H_K=K$\textup{)}.
\begin{itemize}
\item[\textup{(i)}] If $2$ is not inert in $K$, then $d_K=-7,-8$.
\item[\textup{(ii)}] If $2$ is inert and $3$ is ramified in $K$, then
there is no such $K$.
\item[\textup{(iii)}] If $2$ is inert and $3$ is not ramified in $K$, then
$d_K=-11,-19,-43,-67,-163$.
\end{itemize}
\end{theorem}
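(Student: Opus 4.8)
The plan is to use the hypothesis $h_K=1$, i.e. $H_K=K$, through the elementary remark that a real algebraic integer lying in $H_K=K$ must lie in $\mathbb{R}\cap\mathcal{O}_K=\mathbb{Z}$; thus the singular values studied in \S\ref{sec4} become rational integers, and I would pin them down by playing the bound $\abs{\,\cdot\,}\geq1$ for a nonzero integer against the upper estimates of Lemma \ref{inequiv} and against the size $j(\theta_K)\approx q^{-1}+744$ coming from (\ref{jFourier}), where $q=e^{2\pi i\theta_K}$ has $\abs{q}=e^{-\pi\sqrt{\abs{d_K}}}$. For part (i), where $2$ is not inert (so $d_K\equiv0\pmod4$ or $d_K\equiv1\pmod8$), I take $x=\mathbf{N}_{K_{(2)}/H_K}(g_{(0,1/2)}(\theta_K)^{12})$: by Theorem \ref{properties} it is a real algebraic integer dividing $2^{12}$, hence $x=\pm2^k$ with $0\leq k\leq12$. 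As in the proof of Theorem \ref{properties}, $x$ equals, up to a root of unity, $g_{(0,1/2)}(\theta_K)^{12}$ or a product of it with $g_{(1/2,0)}(\theta_K)^{12}$ or $g_{(1/2,1/2)}(\theta_K)^{12}$; applying Lemma \ref{inequiv} to each factor (using $\mathbf{B}_2(0)=\tfrac16$ and $\mathbf{B}_2(\tfrac12)=-\tfrac1{12}$) gives $\abs{x}\leq2^{12}\abs{q}^{1/2}e^{O(\abs{q}^{1/2})}\to0$ as $\abs{d_K}\to\infty$. Since $\abs{x}\geq1$ this bounds $\abs{d_K}$, and for the remaining finitely many $d_K$ one feeds $x=\pm2^k$ into the formula $j(\theta_K)=(256-x)^3/x^2$ (if $d_K\equiv0\pmod4$) or $j(\theta_K)=(x+16)^3/x$ (if $d_K\equiv1\pmod8$) of Lemma \ref{jtog}(ii): the resulting candidate for $j(\theta_K)$ is forced to be a rational integer within $O(\abs{q})$ of $q^{-1}+744$, which excludes every $d_K$ except $-7$ and $-8$.

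For part (ii), $2$ is inert and $3$ is ramified, so in particular $3$ is not inert; since $h_K=1$ the prime of $K$ above $3$ is then principal, whence $\mathcal{O}_K$ contains an element of norm $3$, i.e. the principal form $X^2+XY+\tfrac{1-d_K}{4}Y^2$ of discriminant $d_K$ represents $3$. Being positive definite, this forces $\tfrac{1-d_K}{4}\leq3$, hence $\abs{d_K}\leq11$, and the only such discriminant with $d_K\equiv5\pmod8$ and $3\mid d_K$ is $d_K=-3$, which is excluded; so there is no such $K$. (Alternatively $\gamma_2(\theta_K)$ would generate the degree-$3$ abelian extension $K_{(3)}/K$, yet $K(\gamma_2(\theta_K))=K(j(\theta_K)^{1/3})$ cannot be Galois over $K$ unless $j(\theta_K)$ is already a cube in $K$.)

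Part (iii) is the real content, being the gap in Heegner's proof. Now $2$ is inert and $3$ is not ramified, so Theorem \ref{generator} applies: $n:=\gamma_2(\theta_K)$ is a rational integer with $j(\theta_K)=n^3$, and $\beta:=\zeta_8g_{(0,1/2)}(\theta_K)$ is a real cubic algebraic integer generating $K_{(2)}$ over $K$, with $\abs{\mathbf{N}_{\mathbb{Q}(\beta)/\mathbb{Q}}(\beta)}=2$ (a short computation from Remark \ref{inertconj}). From Lemma \ref{jtog}(ii) one gets that $\beta^4$ is a root of $Y^3-nY-16\in\mathbb{Z}[Y]$, while Lemma \ref{inequiv} gives $0<\beta^4=\abs{g_{(0,1/2)}(\theta_K)}^4<1$ once $\abs{d_K}$ is large, so this cubic has a root in $(0,1)$. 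Moreover $\gamma_3(\tau):=(j(\tau)-1728)^{1/2}$ lies in $\mathcal{F}_2$, so $\gamma_3(\theta_K)\in K_{(2)}$ by Proposition \ref{CM}; since $\gamma_3(\theta_K)^2=n^3-1728\in K$ and $[K_{(2)}:K]=3$ is odd, $\gamma_3(\theta_K)$ lies in $K$, and as $n^3-1728<0$ is not a rational square this produces an integer $v$ with $n^3-1728=v^2d_K$.

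The remaining step — and the main obstacle — is the Diophantine endgame. Combining the companion Weber identities $\mathfrak{f}\mathfrak{f}_1\mathfrak{f}_2=\sqrt2$ and $\mathfrak{f}^8=\mathfrak{f}_1^8+\mathfrak{f}_2^8$ (which through (\ref{ftog}) realize $g_{(0,1/2)}(\theta_K)^4$ and its two conjugates as, up to sign, the roots of $Y^3-nY\pm16$), the fact that the cubic field $\mathbb{Q}(\beta^4)$ is unramified outside $2$ (because $K_{(2)}/K$ is unramified outside $(2)$ and $3\nmid d_K$), the relation $n^3-1728=v^2d_K$, and the size bound $0<\beta^4<1$, one reduces — following Stark's correction of Heegner — to an explicit cubic Diophantine equation in $n$ and $v$ whose only integral solutions yield $n\in\{-32,-96,-960,-5280,-640320\}$, that is $d_K\in\{-11,-19,-43,-67,-163\}$. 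Everything preceding this last step is a fairly direct application of Theorems \ref{properties} and \ref{generator}, Lemmas \ref{jtog} and \ref{inequiv}, and Proposition \ref{CM}; the delicate point, exactly as historically, is extracting the finite list of discriminants from the Diophantine equation.
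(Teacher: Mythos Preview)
Your argument is correct in each part, and for (i) and (iii) it runs parallel to the paper's: bound the norm via Lemma~\ref{inequiv} to get finiteness, and for (iii) set up $\min(\beta^4,K)=X^3-\gamma_2(\theta_K)X-16$ via Theorem~\ref{generator} and Lemma~\ref{jtog}, then hand off to the Heegner--Stark Diophantine endgame (the paper simply cites \cite[pp.~272--274]{Cox} at this point, whereas you sketch the classical route through $\gamma_3$ and the Weber relation $\mathfrak{f}^8=\mathfrak{f}_1^8+\mathfrak{f}_2^8$; both lead to the same place). The genuine difference is (ii): you observe that $h_K=1$ with $3$ ramified forces the principal form to represent $3$, giving $\tfrac{1-d_K}{4}\leq3$ and hence no admissible $d_K$. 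This is an elementary Landau-style argument and is shorter than what the paper does---the paper instead mirrors (i) at conductor $3$, computing $\mathrm{Gal}(K_{(3)}/K)$ from Proposition~\ref{Gee} and bounding $\lvert\mathbf{N}_{K_{(3)}/K}(g_{(0,1/3)}(\theta_K)^{12})\rvert$ by Lemma~\ref{inequiv} to force $d_K>-51$. Your route is cleaner for (ii) in isolation; the paper's has the virtue of uniformity, illustrating the Siegel-function method at a second conductor.
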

\begin{proof}
Since we are assuming that $K$ is neither $\mathbb{Q}(\sqrt{-3})$
nor $\mathbb{Q}(\sqrt{-1})$, we have $d_K\leq -7$. Let
$\alpha=g_{\left[\begin{smallmatrix}0\\1/2\end{smallmatrix}\right]}(\tau_K)$ ($\neq0$) and
$A=e^{-\pi\sqrt{|d_K|}}$.\\
(i) If $d_K\leq-31$, then we see that
\begin{eqnarray*}
&&|\mathbf{N}_{K_{(2)}/K}(\alpha^{12})|^{1/12}\\
&=& \left\{
\begin{array}{ll}
|g_{\left[\begin{smallmatrix}0\\1/2\end{smallmatrix}\right]}(\tau_K)
g_{\left[\begin{smallmatrix}1/2\\1/2\end{smallmatrix}\right]}(\tau_K)| &
\textrm{if}~d_K\equiv0\Mod{8},
\vspace{0.1cm}\\
|g_{\left[\begin{smallmatrix}0\\1/2\end{smallmatrix}\right]}(\tau_K)
g_{\left[\begin{smallmatrix}1/2\\0\end{smallmatrix}\right]}(\tau_K)| &
\textrm{if}~d_K\equiv4\Mod{8},\vspace{0.1cm}\\
|g_{\left[\begin{smallmatrix}0\\1/2\end{smallmatrix}\right]}(\tau_K)| & \textrm{if}~d_K\equiv1\Mod{8}
\end{array}\right.\quad\textrm{by (\ref{norm})}\\
&\leq&\left\{
\begin{array}{ll}
2A^{1/12}e^{2A/(1-A)}\cdot A^{-1/24}e^{2A^{1/2}/(1-A)} &
\textrm{if}~d_K\equiv0\Mod{4},
\vspace{0.1cm}\\
2A^{1/12}e^{2A/(1-A)} & \textrm{if}~d_K\equiv1\Mod{8}
\end{array}
\right.\quad\textrm{by Lemma
\ref{inequiv}}\\
&<&1\quad\textrm{by the fact $A\leq e^{-\pi\sqrt{31}}$}.
\end{eqnarray*}
On the other hand, since $\mathbf{N}_{K_{(2)}/K}(\alpha^{12})$ is a
nonzero integer by Theorem \ref{properties} (ii), the above
inequality is false; hence $d_K>-31$. And, we get the conclusion by
Table \ref{Table}.\\
(ii) Since $2$ is inert and $3$ is ramified in $K$ (equivalently,
$d_K\equiv21\Mod{24}$), we have
\begin{equation*}
\mathrm{Gal}(K_{(3)}/K)\simeq
W_{3,\tau_K}/\mathrm{Ker}_{3,\tau_K}= \bigg\{
\left[\begin{matrix}1&0\\0&1\end{matrix}\right],
\left[\begin{matrix}1&0\\1&1\end{matrix}\right],
\left[\begin{matrix}2&0\\1&2\end{matrix}\right]
\bigg\}
\end{equation*}
by Proposition \ref{Gee}. Let $\beta=g_{\left[\begin{smallmatrix}0\\1/3\end{smallmatrix}\right]}(\tau_K)$
($\neq0$). Since $g_{\left[\begin{smallmatrix}0\\1/3\end{smallmatrix}\right]}(\tau)^{12}\in\mathcal{F}_3$ by
Proposition \ref{level}, we have $\beta^{12}\in K_{(3)}$ by
Proposition \ref{CM}. Furthermore, since $\beta^{12}$ is a real
algebraic integer by the definition (\ref{Siegel}), Propositions
\ref{transform} (vi) and \ref{j2} (i),
$\mathbf{N}_{K_{(3)}/K}(\beta^{12})$ is a nonzero integer by Lemma
\ref{intcoeff}. If $d_K\leq-51$, then we derive that
\begin{eqnarray*}
|\mathbf{N}_{K_{(3)}/K}(\beta^{12})|^{1/12}
&=&|g_{\left[\begin{smallmatrix}0\\1/3\end{smallmatrix}\right]}(\tau_K) g_{\left[\begin{smallmatrix}1/3\\1/3\end{smallmatrix}\right]}(\tau_K)
g_{\left[\begin{smallmatrix}1/3\\2/3\end{smallmatrix}\right]}(\tau_K)|\quad\textrm{by Proposition \ref{transform} (iv) and (v)}\\
&\leq&A^{1/12}|1-\zeta_3| e^{2A/(1-A)}
\cdot(A^{-1/36}e^{2A^{1/3}/(1-A)})^2\quad
\textrm{by Lemma \ref{inequiv}}\\
&=&\sqrt{3}A^{1/36}e^{(2A+4A^{1/3})/(1-A)}\\
&<&1 \quad\textrm{by the fact $A\leq e^{-\pi\sqrt{51}}$}.
\end{eqnarray*}
Hence we must have $-51<d_K\leq-7$. But, there is no such imaginary
quadratic field $K$ with $d_K\equiv21\Mod{24}$ as desired.\\
(iii) Let $x=\zeta_8\alpha$. Since $[K_{(2)}:K]=3$ by the degree
formula (\ref{deg}), we have
\begin{equation*}
\min(x,K)=X^3+aX^2+bX+c\quad\textrm{for some}~a,b,c\in\mathbb{Z}
\end{equation*}
by Theorem \ref{generator} (i) and Lemma \ref{intcoeff}. Furthermore,
we get
\begin{equation*}
\min(x^4,K)=X^3-\gamma_2(\tau_K)X-16\quad(\in\mathbb{Z}[X])
\end{equation*}
by Lemma \ref{jtog} (ii) and Theorem \ref{generator} (ii). Now, by
adopting Heegner's idea \cite{Heegner} one can determine the
possible values of $a,b,c$, from which we obtain
\begin{equation*}
\gamma_2(\tau_K)=0,-32,-96,-960,-5280,-640320.
\end{equation*}
Therefore, we can conclude the assertion (iii) by Table \ref{Table}
and Lemma \ref{j}, although we omit the details
\cite[pp.272--274]{Cox}.
\end{proof}

\begin{remark}
\begin{itemize}
\item[(i)] In 1903 Landau (\cite{Landau} or \cite[Theorem 2.18]{Cox}) presented a simple and elementary proof of
Theorem \ref{classnumberone} (i) by considering the form class group
$\mathrm{C}(d_K)$.
\item[(ii)] Theorem \ref{generator} (i) is essentially a gap in
Heegner's work, which was fulfilled by Stark \cite{Stark}.
\item[(iii)] To every imaginary quadratic order $\mathcal{O}$ of
class number one there is an associated elliptic curve
$E_\mathcal{O}$ over $\overline{\mathbb{Q}}$ admitting complex
multiplication by $\mathcal{O}$. It can be defined over $\mathbb{Q}$
and is unique up to $\overline{\mathbb{Q}}$-isomorphism. For a
positive integer $n$, let $X_{\textrm{ns}}^+(n)$ be the modular
curve associated to the normalizer of the non-split Cartan subgroup
of level $n$ which can be defined over $\mathbb{Q}$ \cite{Baran2}.
If every prime $p$ dividing $n$ is inert in $\mathcal{O}$, then
$E_\mathcal{O}$ gives rise to an integral point of
$X_{\textrm{ns}}^+(n)$ \cite[p.195]{Serre}. Here, by integral
points we mean the points corresponding to elliptic curves with
integral $j$-invariant. As Serre pointed out \cite[p.197]{Serre},
the solutions by Heegner and Stark can be viewed as the
determination of the integral points of $X_\mathrm{ns}^+(24)$. And,
Baran \cite{Baran} recently gave a geometric solution of the class
number one problem by finding an explicit parametrization for the
modular curve $X_{\textrm{ns}}^+(9)$ over $\mathbb{Q}$.
\end{itemize}
\end{remark}

We can also apply the arguments in the proof of Theorem
\ref{classnumberone} (i) and (ii) to solve a problem concerning imaginary
quadratic fields of class number two.

\begin{theorem}
$\mathbb{Q}(\sqrt{-15})$ is the unique imaginary quadratic field of
class number two in which $2$ splits.
\end{theorem}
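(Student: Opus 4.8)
The plan is to run the norm–bound argument of Theorem \ref{classnumberone}(i) one level up the tower, using Shimura's reciprocity law for the Hilbert class field (Proposition \ref{Hilbert}) to control the single nontrivial conjugate of a Siegel value. Note first that $2$ splits in $K$ precisely when $d_K\equiv1\pmod 8$, and that $h_K=2$ forces $K\neq\mathbb{Q}(\sqrt{-3}),\mathbb{Q}(\sqrt{-1})$, so $d_K\leq-7$. For such $K$ the degree formula (\ref{deg}) (together with Proposition \ref{Gee}) gives $[K_{(2)}:H_K]=1$, hence $K_{(2)}=H_K$, and $x:=g_{(0,1/2)}(\theta_K)^{12}$ lies in $H_K$ by Proposition \ref{CM}. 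By Theorem \ref{properties}(i),(ii), $x$ is a nonzero real algebraic integer generating $H_K$ over $K$ whose minimal polynomial over $K$ has integer coefficients; since $[H_K:K]=2$ this polynomial is a quadratic $X^2+aX+b$ with $a,b\in\mathbb{Z}$, and $b=\pm\mathbf{N}_{H_K/K}(x)\in\mathbb{Z}\setminus\{0\}$. So it suffices to show $|\mathbf{N}_{H_K/K}(x)|<1$ for all large $|d_K|$ and then inspect the short residual list of discriminants.

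Next I would evaluate $\mathbf{N}_{H_K/K}(x)=\prod_{Q\in\mathrm{C}(d_K)}h^{u_Q}(\theta_Q)$ for $h(\tau)=g_{(0,1/2)}(\tau)^{12}$ via Proposition \ref{Hilbert} with $p=2$. Write $\mathrm{C}(d_K)=\{Q_0,Q_1\}$ with $Q_0=X^2+XY+((1-d_K)/4)Y^2$ the principal form and $Q_1=aX^2+bXY+cY^2$ the other reduced form. Since $u_{Q_0}=\left(\begin{smallmatrix}1&2\\0&1\end{smallmatrix}\right)\equiv1_2\pmod 2$ and $g_{(0,1/2)}(\tau)^{12}\in\mathcal{F}_2$ is invariant under $\left(\begin{smallmatrix}1&2\\0&1\end{smallmatrix}\right)\in\Gamma(2)$ while $\theta_{Q_0}=\theta_K-2$, the factor coming from $Q_0$ is exactly $x$. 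For the factor coming from $Q_1$, Proposition \ref{transform}(iv),(v) shows $h^{u_{Q_1}}=g_{(r_1,r_2)}(\tau)^{12}$ for one of the three nonzero classes $(r_1,r_2)\in\{(0,1/2),(1/2,0),(1/2,1/2)\}$ of $((1/2)\mathbb{Z}/\mathbb{Z})^2$ (the bottom row of any $u_{Q_1}\in\mathrm{GL}_2(\mathbb{Z}/2\mathbb{Z})$ is nonzero mod $2$), so that $|\mathbf{N}_{H_K/K}(x)|=|g_{(0,1/2)}(\theta_K)|^{12}\,|g_{(r_1,r_2)}(\theta_{Q_1})|^{12}$ with $\theta_{Q_1}=(-b+\sqrt{d_K})/2a$.

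Finally I would estimate both factors with Lemma \ref{inequiv}. Because $Q_1$ is reduced, $a\leq\sqrt{|d_K|/3}$, so $\mathrm{Im}(\theta_{Q_1})=\sqrt{|d_K|}/(2a)\geq\sqrt{3}/2$ and hence $|e^{2\pi i\theta_{Q_1}}|\leq e^{-\pi\sqrt{3}}$; Lemma \ref{inequiv} then bounds $|g_{(r_1,r_2)}(\theta_{Q_1})|$ by an absolute constant $C_0$ in each of the three cases (numerically $C_0<1.44$). On the other hand, with $A=e^{-\pi\sqrt{|d_K|}}$, Lemma \ref{inequiv}(ii) gives $|g_{(0,1/2)}(\theta_K)|\leq2A^{1/12}e^{2A/(1-A)}$. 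Multiplying, $|\mathbf{N}_{H_K/K}(x)|\leq\big(2A^{1/12}e^{2A/(1-A)}\big)^{12}C_0^{12}$, and a direct numerical check shows the right side is $<1$ once $|d_K|\geq23$. This contradicts $\mathbf{N}_{H_K/K}(x)\in\mathbb{Z}\setminus\{0\}$, so $|d_K|<23$; the only discriminants with $d_K\equiv1\pmod 8$ and $-23<d_K<0$ are $-7$ and $-15$, and $\mathbb{Q}(\sqrt{-7})$ has class number one by Table \ref{Table}. Hence $d_K=-15$, while conversely $\mathbb{Q}(\sqrt{-15})$ has class number two and $-15\equiv1\pmod 8$, so it is indeed the unique such field. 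The step demanding genuine care is this last numerical estimate: $C_0^{12}$ is of size roughly $75$, so one must retain the reduced–form bound $\mathrm{Im}(\theta_{Q_1})\geq\sqrt{3}/2$ in order to beat it already at $|d_K|=23$ (there $A\approx3\times10^{-7}$, so the product sits well below $1$).
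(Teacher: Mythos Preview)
Your argument is correct and follows essentially the same route as the paper: compute $\mathbf{N}_{H_K/K}(g_{(0,1/2)}(\theta_K)^{12})$ via Proposition \ref{Hilbert} with $p=2$, bound both factors with Lemma \ref{inequiv} using the reduced-form inequality $a\leq\sqrt{|d_K|/3}$, contradict integrality of the norm for large $|d_K|$, and check the remaining discriminants. Your numerical cutoff $|d_K|\geq23$ is a shade sharper than the paper's $|d_K|\geq31$ (because you bound the second factor by a uniform constant rather than carrying the $a$-dependence), which lets you finish directly from Table \ref{Table} without invoking the external list of class-number-two discriminants that the paper cites in its closing Remark.
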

\begin{proof}
Let $K$ be an imaginary quadratic field of discriminant $d_K$ and
class number two in which $2$ splits (so $d_K\equiv1\Mod{8}$). Then
the form class group $\mathrm{C}(d_K)$ consists of two reduced
quadratic forms, that is
\begin{eqnarray*}
&&Q_1=X^2+XY+((1-d_K)/4)Y^2,\\
&&Q_2=aX^2+bXY+cY^2\quad\textrm{for some $a,b,c\in\mathbb{Z}$ with
$2\leq a\leq\sqrt{|d_K|/3}$}.
\end{eqnarray*}
And, we have by Proposition \ref{Hilbert} with $p=2$
\begin{eqnarray*}
&&\tau_{Q_1}=(-1+\sqrt{d_K})/2,\quad u_{Q_1}=\left[\begin{matrix}
1&0\\0&1
\end{matrix}\right],\\
&&\tau_{Q_2}=(-b+\sqrt{d_K})/2a,\quad u_{Q_2}=\left[\begin{matrix}
*&*\\r&s
\end{matrix}\right]~\textrm{for some}~\left[\begin{matrix}r\\s\end{matrix}\right]\in\bigg\{
\left[\begin{matrix}0\\1\end{matrix}\right],\left[\begin{matrix}1\\0\end{matrix}\right],
\left[\begin{matrix}1\\1\end{matrix}\right]\bigg\}.
\end{eqnarray*}
Let $\alpha=g_{\left[\begin{smallmatrix}0\\1/2\end{smallmatrix}\right]}(\tau_K)$ ($\neq0$). Then $\alpha^{12}\in
H_K$ by (\ref{norm}). If $d_K\leq-31$, then we derive that
\begin{eqnarray*}
&&|\mathbf{N}_{H_K/K}(\alpha^{12})|^{1/12}\\
&=&\prod_{Q\in\mathrm{C}(d_K)}|(g_{\left[\begin{smallmatrix}0\\1/2\end{smallmatrix}\right]}(\tau)^{12})^{u_Q}(\tau_Q)|^{1/12}
\quad\textrm{by Proposition \ref{Hilbert}}\\
&=&|g_{\left[\begin{smallmatrix}0\\1/2\end{smallmatrix}\right]}((-1+\sqrt{d_K})/2)|\times
|g_{\left[\begin{smallmatrix}r/2\\s/2\end{smallmatrix}\right]}((-b+\sqrt{d_K})/2a)|\quad\textrm{by
Proposition \ref{transform} (iv) and (v)}\\
&\leq&2A^{1/12}e^{2A/(1-A)}\times \left\{
\begin{array}{ll}
2A^{1/12a}e^{2A^{1/a}/(1-A^{1/a})} & \textrm{if}~r=0,
\vspace{0.1cm}\\
A^{-1/24a}e^{2A^{1/2a}/(1-A^{1/a})}
 & \textrm{if}~r=1
\end{array}\right.\quad\textrm{with $A=e^{-\pi\sqrt{|d_K|}}$ by Lemma \ref{inequiv}}\\
&\leq& \left\{\begin{array}{ll} 4A^{1/12}
e^{2A/(1-A)-\pi\sqrt{3}/12+2e^{-\pi\sqrt{3}}/(1-e^{-\pi\sqrt{3}})} &
\textrm{if}~r=0,
\vspace{0.1cm}\\
2A^{1/12-1/48} e^{2A/(1-A)+
2e^{-\pi\sqrt{3}/2}/(1-e^{-\pi\sqrt{3}})} & \textrm{if}~r=1
\end{array}\right.\quad\textrm{because $A<1$ and $2\leq
a\leq\sqrt{|d_K|/3}$}\\
&<&1\quad\textrm{by the fact $A\leq e^{-\pi\sqrt{31}}$}.
\end{eqnarray*}
On the other hand, since $\alpha^{12}$ is a real algebraic integer,
$\mathbf{N}_{H_K/K}(\alpha^{12})$ is a nonzero integer by Lemma
\ref{intcoeff}. Therefore we should have $d_K>-31$. One can then
easily see by the following remark that $\mathbb{Q}(\sqrt{-15})$ is
the unique one.
\end{proof}

\begin{remark}
There are exactly eighteen imaginary quadratic fields of class
number two whose discriminants are as follows
\cite[p.636]{Ribenboim}:
\begin{equation*}
-15,-20,-24,-35,-40,-51,-52,-88,-91,-115,
-123,-148,-187,-232,-235,-267,-403,-427.
\end{equation*}
\end{remark}

\bibliographystyle{amsplain}

\address{% Corresponding Author
Department of Mathematics\\
Hankuk University of Foreign Studies\\
Yongin-si, Gyeonggi-do 449-791\\
Republic of Korea } {dhshin@hufs.ac.kr}
%%%%%%%%%

\end{document}